\documentclass[11pt]{article}
\usepackage{amssymb,amsmath,amsfonts,amsthm}
\usepackage{graphicx}
\usepackage{latexsym}
 \usepackage{epsfig} 

\tolerance=1000
\topmargin 0pt           
\headsep 10pt            

\setlength{\textwidth}{6.5in}    
\setlength{\textheight}{8.7in}  
\setlength{\oddsidemargin}{0.25cm} \setlength{\evensidemargin}{0.25cm}
\setlength{\topskip}{1cm}

\newtheorem{thm}{Theorem}[section]
\newtheorem{lem}[thm]{Lemma}

\newtheorem{cor}[thm]{Corollary}
\newtheorem{conj}[thm]{Conjecture}
\newtheorem{cl}[thm]{Claim}

\newtheorem{prob}[thm]{Problem}

\def\sat{{\text{\rm sat}}}
\def\ssat{{\text{\rm ssat}}}
\def\binom#1#2{{#1\choose#2}}

\def\cH{{\mathcal H}}

\def\qed
 {\ifhmode\unskip\nobreak\hfill$\Box$\medskip\fi
 \ifmmode\eqno{\Box}\fi}


\title{{\bf \huge Cycle-saturated graphs \\ with minimum number of edges}
    \footnote{ This copy was printed on {\today}.\quad  
    {\rm\small {\jobname}.tex,} \hfill Version as of 
March 01, 2011.
    \break\indent{\it Keywords:} graphs, cycles, extremal graphs, minimal saturated graphs. 
    \hfill \break\indent{\it 2010 Mathematics Subject Classification:} 05C38, 05C35.}}
\pagestyle{myheadings} \markright{{\small\sc F\"uredi and Y.Kim:}
                {\it\small Cycle-saturated graphs with minimum number of edges}}
                \thispagestyle{empty}
\author{{\bf Zolt\'an F\"uredi}
\thanks{ Research supported in part by the Hungarian National Science Foundation
 OTKA, and by the National Science Foundation under grant NFS DMS 09-01276.} 
\\ Department of Mathematics, University of Illinois at Urbana-Champaign,
\\ Urbana, IL 61801, USA \quad and
\\ R\'enyi Institute of Mathematics of the Hungarian Academy of Sciences,
\\ Budapest, P. O. Box 127, Hungary-1364
\\ e-mail: \texttt{z-furedi@illinois.edu}\quad  and\quad \texttt{furedi@renyi.hu}
\\ and
\\ {\bf  Younjin Kim}
\\ Department of Mathematics, University of Illinois at Urbana-Champaign,
\\ Urbana, IL 61801, USA. 
\\ e-mail: \texttt{ykim36@illinois.edu}}
\date{${}$}

\begin{document}
\maketitle

\begin{abstract}
A graph $G$ is called $H$-saturated if it does not contain any
	copy of $H$, but for any edge $e$ in the complement of $G$
	the graph $G+e$  contains some $H$.
The minimum size of an $n$-vertex  $H$-saturated graph is
	denoted by  $\sat(n,H)$. 
We prove $$ 
	\sat(n,C_k) = n + n/k    + O((n/k^2) + k^2)$$ 
holds for all $n\geq k\geq 3$, where $C_k$ is a cycle with length $k$.
We have a similar result for semi-saturated graphs
	$$ \ssat(n,C_k) = n + n/(2k)    + O((n/k^2) + k).$$ 
We conjecture that our three constructions are optimal.
\end{abstract}

\section{A short history}

\noindent
    A graph $G$ is said to be   {${H}$-saturated} if\\
    --- it does not contain $H$ as a subgraph, but\\
   --- the addition of any new edge (from $E(\overline{G})$) creates a copy of $H$. \\
Let  $ {\rm sat} (n,H)$ denote the  
  {\it minimum} size of an $H$-saturated graph on $n$ vertices. 
Given $H$, it is difficult to determine $ {\rm sat} (n,H)$ because
 this function is not necessarily monotone in $n$, or in $H$.
Recent surveys are by J. Faudree, Gould, and Schmitt~\cite{fur:Faudree_survey}, 
 and by Pikhurko~\cite{Pik4}  on the hypergraph case.
It is known~\cite{fur:KT} that for every graph $H$ there exists a constant 
 $c_H$ such that 
         $$   {\rm sat} (n,H) < c_Hn$$
 holds for all $n$.
However, it is not known if the $ \lim_{n\to \infty}  {\rm sat} (n,H)/n$ exists;
 Pikhurko~\cite{Pik4} has an example of a four graph set $\cH$ when
  $\sat(n,\cH)/n$ oscillates, it does not tend to a limit. 

Since the classical theorem of Erd\H os, Hajnal, and Moon~\cite{fur:EHM}  
 (they determined ${\rm sat} (n,K_p)$ for all $n$ and $p$), 
 and  its generalization for hypergraphs by Bollob\'as~\cite{fur:boll65},
 there have been many interesting hypergraph results (e.g.,  Kalai~\cite{fur:Kal}, 
 Frankl~\cite{fur:frankl}, Alon~\cite{fur:Alon}, using Lov\'asz' algebraic method) 
but here we only discuss the graph case.

Remarkable asymptotics were given by Alon, Erd\H os, Holzman, and 
 Krivelevich~\cite{fur:AEHK,fur:EH} (saturation and degrees). 
Bohman, Fonoberova, and Pikhurko~\cite{fur:BFP}
 determined  the sat-function asymptotically 
 for a class of complete multipartite graphs.
More recently,
 for multiple copies of $K_p$
    Faudree, Ferrara, Gould, and Jacobson~\cite{fur:FFGJ} determined 
    $ {\rm sat} ( {tK_p}, n)$ for  $n \geq n_0(p,t)$.

\section{Cycle-saturated graphs}

What is the saturation number for the cycle, $C_k$?
This has been considered by various authors, 
  however, in most cases it has remained unsolved. 
Here relatively tight bounds are given.

\begin{thm} \label{th:1}
        \quad For all $k \geq 7$ and $n\geq 2k-5$
\begin{equation}\label{eq:th1}
  \left(1 + \frac{1}{k+2}\right)n - 1 < {\rm sat} (n,C_k) < 
    \left(1 + \frac{1}{k-4}\right)n + \binom{k-4}{2}. 
  \end{equation}
 \end{thm}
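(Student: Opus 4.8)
The two inequalities in \eqref{eq:th1} are proved separately, and the lower bound is the more delicate half. So let $G$ be an arbitrary $C_k$-saturated graph on $n\ge 2k-5$ vertices; the plan is to show that its cyclomatic number $\beta=e(G)-|V(G)|+c(G)$ is large, which is exactly equivalent to the stated estimate once we know $G$ is connected. Connectivity is the first observation: an edge joining two components of $G$ would be a bridge in $G+e$, hence would lie on no cycle, so $G+e$ could not contain a fresh $C_k$. Next, $\delta(G)\ge 1$, and one studies vertices of degree $1$ and $2$. If $v$ has a unique neighbour $w$, then for every other vertex $u$ the copy of $C_k$ in $G+uv$ must pass through $v$, hence through the edge $uv$, which forces a $w$--$u$ path of length exactly $k-2$ avoiding $v$. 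From this one sees that two degree-$1$ vertices cannot share a neighbour (else $G+vv'$ would only create a triangle), and that the neighbours of distinct degree-$1$ vertices are pairwise joined by paths of length $k-3$; so degree-$1$ vertices are scarce and affect only the lower-order terms.

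The degree-$2$ vertices of $G$ form vertex-disjoint \emph{threads}: maximal paths $v_0v_1\cdots v_t$ whose internal vertices all have degree $2$. The key structural step is the claim that every thread satisfies $t\le k$ (apart from a degenerate pendant-cycle case handled directly). To see this, suppose $t\ge k+1$ and add the non-edge $v_1v_3$; tracing the forced degree-$2$ segments, a $C_k$ in $G+v_1v_3$ through that edge is either the triangle $v_1v_2v_3$, of length $3\ne k$, or a cycle $v_1,v_0,\dots,v_t,v_{t-1},\dots,v_3,v_1$ of length $R+t-1$, where $R\ge 1$ is the length of a $v_0$--$v_t$ path avoiding the interior of the thread; since $t\ge k+1$ this is $\ge k+1>k$, so no $C_k$ arises, contradicting saturation. (Examining the remaining non-edges on the thread and the interaction with the rest of $G$ sharpens this, and is what actually pins down the constant.)

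To finish, suppress every degree-$2$ vertex, replacing each thread by a single edge, to obtain a connected multigraph $H$ with $\delta(H)\ge 3$ on the degree-$\ge 3$ vertices (the few degree-$1$ vertices being treated separately). Then $|V(H)|$ and $|E(H)|$ are linear in $\beta=\beta(H)=\beta(G)$, while $n=|V(H)|+\sum_{\text{threads}}(t_i-1)$ with each $t_i-1$ bounded via the claim; combining these inequalities and being careful with the constants yields $(k+2)\bigl(e(G)-n\bigr)>n-(k+2)$, i.e. $e(G)>\bigl(1+\tfrac1{k+2}\bigr)n-1$. The main obstacle in this direction is precisely this last book-keeping: the naive combination of "threads have $\le k$ edges'' with "$\delta(H)\ge 3$'' only gives $e(G)\ge\bigl(1+\Omega(1/k)\bigr)n$, so to squeeze out the exact coefficient $\tfrac1{k+2}$ one must show that long threads are neither too numerous nor too clustered (a long thread at a vertex constrains the lengths of all paths leaving that vertex through the rest of $G$, which limits the structure there).

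For the upper bound I would exhibit an explicit $C_k$-saturated graph on $n$ vertices with at most $\bigl(1+\tfrac1{k-4}\bigr)n+\binom{k-4}{2}$ edges, built from a single copy of $K_{k-4}$ -- which accounts for the $\binom{k-4}{2}$ term and, crucially, supplies paths of \emph{every} length $1,2,\dots,k-5$ between any two of its vertices -- together with a sparse "skeleton'' on the remaining vertices (essentially a long path carrying a periodic family of short chords, or a bounded-depth system of pendant paths). One must arrange that (i) every cycle of $G$ is confined to a small region and has length $\le k-1$, so $G$ is $C_k$-free; (ii) for every non-edge $uv$ there is a $u$--$v$ path of length exactly $k-1$, obtained by routing through the $K_{k-4}$ and using its length flexibility to hit $k-1$ exactly, after using the skeleton to land in the right range; and (iii) the skeleton contributes only about $n/(k-4)$ edges beyond a spanning tree. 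The delicate point -- and where naive attempts fail -- is making (i) and (ii) hold together: if the sparse part meets the clique in a single vertex, non-edges buried inside it have no long connection and saturation fails; if it meets the clique in several vertices, long cycles appear and $C_k$-freeness fails; and vertex-pairs that are mutually close but far from the clique are exactly the cases that force the precise lengths chosen in the skeleton. Verifying (i)--(iii) for the final graph, and checking the boundary cases with $n$ close to $2k-5$, is the bulk of the work for this half.
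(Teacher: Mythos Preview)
Your lower-bound plan has a real gap. Suppressing the degree-$2$ threads and using only $\delta(H)\ge 3$ together with ``each thread has at most $k$ edges'' gives, after the book-keeping you allude to, merely
\[
\beta(G)=|E(H)|-|V(H)|+1\ \ge\ \tfrac13|E(H)|+1\ \ge\ \frac{n}{3k-1}+O(1),
\]
so $e(G)\ge\bigl(1+\tfrac{1}{3k-1}\bigr)n+O(1)$, a factor of roughly $3$ away from $\tfrac{1}{k+2}$. You acknowledge this and then assert that ``long threads are neither too numerous nor too clustered,'' but nothing in your outline explains how such a statement would recover the missing factor; the suppression argument simply does not see enough of the saturation hypothesis. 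The paper's proof is genuinely different: its engine is a lemma (proved by an iterative build-up $G_1\subset G_2\subset\cdots$, repeatedly forcing $(k-1)$-paths between non-adjacent neighbours) showing that any $C_k$-\emph{semi}saturated graph with $\delta\ge 2$ already satisfies $e\ge\frac{k}{k-1}n-\frac{k+1}{k-1}$. This is then applied to $G\setminus X$ and combined linearly with a degree-sum inequality and an edge-count over the partition $X\cup Y_3\cup Y_{4+}\cup Z_2\cup Z_{3+}$; summing the three inequalities produces $(k+2)e\ge(k+3)n-O(k)$ directly. No thread-suppression is used, and the constant $\tfrac{1}{k+2}$ falls out of the linear algebra, not from a sharper thread bound.

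Your upper-bound paragraph is a specification, not a construction: you never say what the ``skeleton'' is, and the conditions (i)--(iii) you list are exactly the problem, not its solution. In particular, your worry that attaching the sparse part at a single clique vertex fails, while attaching at several creates a $C_k$, is correct --- and you do not resolve it. The paper's graph is completely explicit: take $K_{k-3}$ minus one edge on $B\cup C$ (with $|B|=2$, $|C|=k-5$), a $K_4$ minus one edge on $A\cup B$ (with $|A|=2$), a few pendant edges off $C$, and then $t$ internally disjoint $(a_1,a_2)$-paths of length $k-3$. The point is that $a_1,a_2$ are joined inside the core by paths of every length in $\{1,2,4,5,\dots,k-2\}$ but \emph{not} $3$; this single missing length is what makes the graph $C_k$-free while still letting every added edge close a $k$-cycle. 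Your $K_{k-4}$ idea is in the right spirit, but without the two-vertex attachment and the deliberate length-$3$ gap you have no construction.
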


The construction giving the upper bound is presented at the end of this section,
 the proof of the lower bound (which works for all $n,k\geq 5$) is postponed to 
 Section~\ref{s:sat_lower}.

The case of $ {\rm sat}(n,C_3)=n-1$ is trivial; 
 the cases $k=4$ and $k=5$ were established by Ollmann~\cite{fur:Oll} in 1972 and  
 by   Ya-Chen~\cite{fur:yachen_C5} in  2009, resp. 
\begin{eqnarray}  
  {\rm sat} (n, {C_4})&=&\big\lfloor \frac{3n-5}{2}\big\rfloor \text{~~~~ for~~} n \geq 5.
   \notag 
   \\
 {\rm sat} (n, {C_5})&=& \big\lceil \frac{10(n-1)}{7} \big\rceil \text{~~~~ for~~} n \geq 21.
  \label{fur:eqC5}
  \end{eqnarray}
Actually, (\ref{fur:eqC5}) was conjectured by Fisher, Fraughnaugh, Langley~\cite{fur:FFL}. 
Later Ya-Chen~\cite{fur:yachen_C5all} determined  ${\rm sat} (n, {C_5})$ for all
 $n$, as well as all extremal graphs.

The best previously known general lower bound came from 
  Barefoot, Clark, Entringer, Porter, Sz\'ekely, and Tuza~\cite{BCE}, 
   and the best upper bound (a clever, complicated construction resembling a bicycle wheel) 
  came from Gould, {\L}uczak, and Schmitt~\cite{fur:GLS} 
\begin{equation}\label{eq:old}
   \left( 1+ \frac{1}{2k+8} \right) n \leq   
    {\rm sat} (n,C_k)  \leq \left( 1 + \frac{2}{k- \varepsilon(k) }\right)n + O(k^2)
   \end{equation}
  where $\varepsilon(k) = 2$ for $k$ even $\geq 10$, $\varepsilon(k) = 3$ for $k$ odd $\geq 17$.
Although there is still a gap, Theorem~\ref{th:1} supersedes all earlier results
 for $k\geq 6$ except the construction giving  
  ${\rm sat} (n, {C_6}) \leq \frac{3}{2}n$  for $n \geq 11$ from~\cite{fur:GLS}.

Our new construction for a $k$-cycle saturated graph for $n = (k-1)+ t(k-4)$
  can be read from the picture below.
    \begin{figure}[htp] 
        \centering
    \includegraphics[scale=0.44]{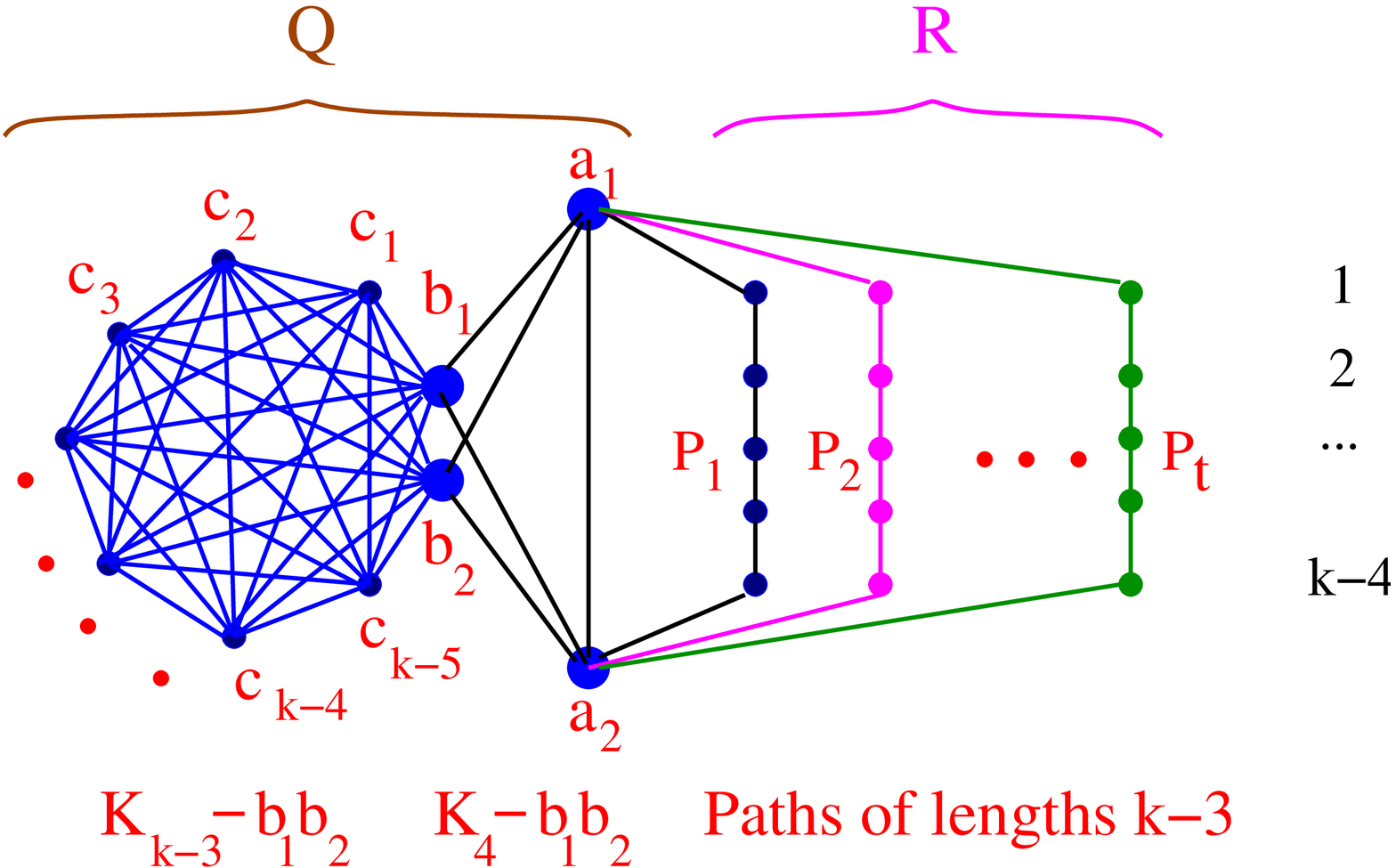} 
    \end{figure}
\noindent

To be precise, define the graph $H:=H_{k,n}$ on $n$ vertices,
  for arbitrary $n> k\geq 7$ as follows.
Write $n$ in the form 
 $$n=(k-1)+r+t(k-4)
 $$ 
 where $t\geq 1$ is an integer and  $0\leq r \leq k-5$. 
The vertex set $V(H)$ consists of the pairwise disjoint sets
 $A$, $B$, $C$, $D$, and $R_i$ for $1\leq i\leq t$, 
$V(H) = A \cup B \cup C \cup D\cup R_1 \cup R_2 \cup \cdots \cup R_t$ 
 where $|A|=|B|=2$, $|C|=k-5$, $|D|=r$,
  and 
  $|R_1|=|R_2|= \cdots = |R_t|= k-4$ and
   $A = \{ a_1, a_2 \}$, $B= \{ b_1, b_2 \}$,  $C= \{ c_1,c_2, \cdots , c_{k-5} \}$,
 $D= \{ d_1,d_2, \cdots , d_{r} \}$,  
 $R_\alpha= \{ r_{\alpha,1},r_{\alpha,2}, \dots , r_{\alpha, k-4} \}$.  
We also denote $A\cup B\cup C\cup D$ by $Q$ and $R_1\cup \dots\cup R_t$ by $R$.
 
The edge set of $H$ does not contain $b_1b_2$ and it 
 consists of an almost complete graph $K_{k-3}$ minus an edge on $C\cup B$, 
 a $K_4$ minus an edge on $B\cup A$, 
 $r$ pending edges connecting $c_i$ and $d_i$,  
 and $t$ paths $P_\alpha$ of length $k-3$ with vertex sets  $A \cup R_\alpha$
 with endpoints $a_1$ and $a_2$. 
The number of edges
$$
  |E(G)|=    \binom{k-3}{2}+4+r+t(k-3). 
  $$ 
It is not difficult to check that, indeed, $H$ is $C_k$-saturated (See details in Section~\ref{s:Hk}).
After which, a little calculation yields the upper bound in (\ref{eq:th1}). 

We strongly believe that this construction is essentially optimal.
 \begin{conj}\label{conj:1}
 There exists a $k_0$ such that
 ${\rm sat}(n,C_k)= \left(1+ \dfrac{1}{k-4}\right)n+O(k^2) $
holds for each $k> k_0$. 
 \end{conj}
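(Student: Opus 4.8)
The upper bound $\sat(n,C_k)\le\bigl(1+\tfrac1{k-4}\bigr)n+\binom{k-4}{2}$ is already delivered by the graph $H_{k,n}$ of Theorem~\ref{th:1}, so the entire content of the conjecture is a matching lower bound that upgrades the coefficient $1/(k+2)$ in~(\ref{eq:th1}) to $1/(k-4)$. The plan is to work with the cyclomatic number. First I would note that a $C_k$-saturated graph $G$ is connected: a non-edge joining two components would be a bridge whose addition creates no cycle at all, contradicting saturation. Hence $|E(G)|=n-1+\mu$, where $\mu=\mu(G)$ is the cycle rank (first Betti number), and the goal collapses to the single inequality $\mu\ge\tfrac{n}{k-4}-O(k^2)$, equivalently $n\le(k-4)\mu+O(k^3)$.

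Next I would sort $V(G)$ by degree: leaves ($n_1$ vertices of degree $1$), degree-$2$ vertices ($n_2$ of them), and branch vertices ($b$ of them, degree $\ge3$), so that $n=n_1+n_2+b$. Suppressing every degree-$2$ vertex turns $G$ into a multigraph $M$ on the $b+n_1$ remaining vertices with the \emph{same} cycle rank $\mu$, hence with exactly $b+n_1-1+\mu$ edges, each edge being a \emph{thread}: a path of $G$ whose interior vertices all have degree $2$. Writing $\ell(e)$ for the length of the thread $e$, one has $n_2=\sum_{e\in E(M)}(\ell(e)-1)$. The argument then rests on two estimates: a \emph{thread-length bound} $\ell(e)\le k-3$, and control of the \emph{core}, $b+n_1=O(k^2)$. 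Granting both, $n_2\le(k-4)\,|E(M)|=(k-4)(\mu+b+n_1-1)$, whence $n=n_1+n_2+b\le(k-4)\mu+(k-3)(b+n_1)=(k-4)\mu+O(k^3)$, which is exactly what is needed.

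For the thread-length bound I would use a ``short chord forces a short detour'' mechanism. If a thread $v_0v_1\cdots v_\ell$ has two interior vertices $v_i,v_{i+2}$ at distance $2$, then $v_iv_{i+2}$ is a non-edge whose addition must create a $C_k$; but the interior of a thread carries no other edges, so every $v_i$–$v_{i+2}$ path other than the direct one of length $2$ must exit at $v_0$, cross the rest of $G$, and re-enter at $v_\ell$. Bookkeeping shows such a cycle has length $\ell-1+P_{\mathrm{ext}}$, where $P_{\mathrm{ext}}\ge1$ is an external $v_0$–$v_\ell$ distance; forcing it to equal $k$ already gives the crude bound $\ell\le k$. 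The same accounting applied to a pendant thread, where $v_\ell$ is a leaf and no return route exists, shows that a thread ending in a leaf carries at most two interior degree-$2$ vertices, and (by an analogous cherry argument) no two leaves share a neighbour; thus leaves and their attached pendant structure account for only $O(n_1)$ degree-$2$ vertices, and only threads running between two cycle-bearing vertices can be long.

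The hard part is to replace the crude $\ell\le k$ by the sharp $\ell\le k-3$ and to pin down the error at $O(k^2)$. A single long thread can in fact be saturated, provided the core supplies $v_0$–$v_\ell$ detours of every length demanded by its interior chords $v_iv_{i+j}$, namely lengths $k-\ell-1+j$; realising many such lengths forces a correspondingly rich, edge-expensive core, so there is a genuine trade-off between long threads (which save degree-$2$ vertices per unit of cycle rank) and the dense cores their saturation requires. The conjecture asserts that the optimum is attained exactly by the bundle of $(k-3)$-paths in $H_{k,n}$, giving the ratio $k-4$; proving that no mixed profile of thread lengths and core structures beats it is the step the present $1/(k+2)$ argument cannot reach. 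One must also discharge the two dangerous regimes: a large core is harmless because the identity $\sum_v(d(v)-2)=2\mu-2$ forces $\mu\ge\tfrac12(b-n_1)$, so many branch vertices automatically produce many edges, while a large number of leaves must be excluded from the sparse case using the strong reach condition each leaf imposes on its neighbour (a path of length $k-2$ to almost every vertex). I expect the resolution to require a global discharging scheme in which the per-thread savings of every over-length thread are charged precisely against the extra core edges its saturation entails, and it is the delicacy of balancing this trade-off within an $O(k^2)$ error that keeps the statement conjectural.
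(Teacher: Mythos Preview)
The statement you are addressing is labelled a \emph{conjecture} in the paper, and the paper supplies no proof of it: the only argument offered is the construction $H_{k,n}$ giving the upper bound, together with the lower bound $\bigl(1+\tfrac{1}{k+2}\bigr)n-1$ of Theorem~\ref{th:1}, which falls short of the conjectured coefficient by a constant factor in the second-order term. So there is no ``paper's own proof'' to compare against; the lower bound with coefficient $1/(k-4)$ is genuinely open.

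Your write-up is therefore not a proof but a strategy sketch, and to your credit you say so explicitly in the last paragraph. The two load-bearing hypotheses you ``grant''---the sharp thread bound $\ell(e)\le k-3$ and the core bound $b+n_1=O(k^2)$---are not side lemmas but essentially \emph{are} the conjecture. In particular, $b+n_1=O(k^2)$ is false for $C_k$-saturated graphs in general (one can replicate the core of $H_{k,n}$ many times and still obtain saturated graphs with unboundedly many branch vertices), so any argument of this shape must invoke edge-minimality at that very step; your identity $\sum_v(d(v)-2)=2\mu-2$ only yields $b\le n_1+2\mu$, which controls $b$ in terms of $n_1$ and $\mu$ but does not bound $b+n_1$ by any function of $k$ alone. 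Likewise, the structural information actually available (Lemma~\ref{le:structure}(vi)) bounds induced paths in $Z_2$ by length $k-2$, which translates to threads of length up to $k$, not $k-3$; closing that gap of three is already nontrivial and interacts with the core structure exactly as you describe. In short, your outline correctly locates where the difficulty lives, but the passage from ``granting both'' to the conclusion is the entire content of the conjecture, and nothing here goes beyond what the paper already knows.
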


\section{The graph $H_{k,n}$ is $C_k$-saturated,\\ the proof of the upper bound for $\sat(n,C_k)$}
\label{s:Hk}

First we check that $H:=H_{k,n}$ is $C_k$-free.
If a cycle with vertex set $Y$ is entirely in $Q$, then it is contained in $A\cup B\cup C$,
 so $|Y|\leq k-1$. 
If $Y$ contains a vertex $r_{\alpha,i}$ then $A\cup R_\alpha\subset Y$, the $k-3$ edges 
 of the path $P_\alpha$ are part of the cycle.
However, it is impossible to join $a_1$ and $a_2$ by a path of length 3, so $|Y|\neq k$.

The key observation to know that $H$ is $C_k$-saturated is that $a_1$ and $a_2$  are connected 
 inside $Q$ by a path $T_\ell$ of any other lengths $\ell$ except for $3$
\begin{equation}\label{eq:Tl}
  \exists\text{ path } T_\ell\subset Q: \ell\in \{ 1,2,4,5,\dots, k-3,k-2\}\, 
  \text{\rm  with endpoints }a_1, \, a_2.
  \end{equation}
For example, $T_1$ is $a_1a_2$, $T_2=a_1b_1a_2$, $T_4=a_1b_1c_1b_2a_2$, etc. 
Also the vertices 
  $a_i$ ($i=1,2$) and $q\in Q\setminus \{ a_i\}$ are connected by a path $U^i(m)$
 of length $m$ inside $Q$ for $\lceil (k+1)/2\rceil \leq m\leq k-2$.
 \begin{equation}\label{eq:Um}
  \exists\text{ path } U^i(m)\subset Q: m\in \{\lceil (k+1)/2\rceil ,\dots, k-3,k-2\}\, 
   \text{\rm  with endpoints }a_i, \, q\in Q.
  \end{equation}
Note that this is true for any $m\geq 4$ but we will apply (\ref{eq:Um}) only for
 $\lceil (k+1)/2\rceil\geq 4$.

Now add an edge $e$ to $H$ from its complement. 
We distinguish  four disjoint cases.
\newline\noindent${}$Case 1.\quad If $e$ is contained in the induced cycle $A\cup R_\alpha$
 then we get a path connecting $a_1$ and $a_2$ in $A\cup R_\alpha$ of length 
 $t$, where $t$ is at least two and at most $k-4$.
This path with $T_{k-t}$ form a $k$-cycle.
\newline\noindent${}$Case 2.\quad If the endpoints of $e$ are $r_{\alpha,i}$ and $r_{\beta,j}$ 
 with $\alpha\neq \beta$ then we may suppose that $1\leq i\leq j\leq k-4$.
The vertex $r_{\alpha,i}$ splits the path $P_\alpha$ into two parts, $P_\alpha^1$ and $P_\alpha^2$,
 where $P_\alpha^1$ starts at $a_1$ and has length 
 $i$, and $P_\alpha^2$ ends at $a_2 $ and has length $k-3-i$.
Consider the path $\pi:=P_\alpha^1eP_\beta^{2}$, its length is $k-2-j+i$.
This length is between 3 and $k-2$ so we can apply (\ref{eq:Tl}) to add an appropriate 
 $T_{j-i+2}$ to complete $\pi$ to a $k$-cycle unless $j-i+2=3$.
 In the latter, the edge $a_1a_2$ together with $P_\beta^1$, $e$, and $P_\alpha^2$ form a $C_k$.
\newline\noindent${}$Case 3.\quad If the endpoints of $e$ are $r_{\alpha,i}$ 
 and  $q\in B\cup C\cup D$, then again by symmetry, we may suppose that
 $i\leq (k-3)/2$, so the length of $P_\alpha^1$ is at most $\lfloor (k-3)/2 \rfloor$.
Then, by (\ref{eq:Um}) there is an $U^1(m)$ so that $P_\alpha^1$, $e$ and $U^1(m)$ form a $k$-cycle.
\newline\noindent${}$Case 4.\quad Finally, $e$ is contained in $Q$.\\
For $e=a_1c_1$  we use $P_1$ to get the $k$-cycle $a_1c_1b_1a_2P_1$, \\ 
for $e=a_1d_1$ we have the $k$-cycle $d_1c_1c_2\dots c_{k-5}b_2a_2b_1a_1$, \\
for $e=b_1b_2$ we have to use $P_1$, i.e., here we need again that $t\geq 1$, \\
for $e=b_1d_1$ we have the $k$-cycle $d_1c_1c_2\dots c_{k-5}b_2a_2a_1b_1$, \\
for $e=c_1d_2$ we have the $k$-cycle $c_1d_2c_2\dots c_{k-5}b_2a_2a_1b_1$, finally\\
for $e=d_1d_2$ we have the $k$-cycle $c_1d_1d_2c_2\dots c_{k-5}b_2a_2b_1$. \qed

\section{Semisaturated graphs}

 A graph $G$ is   {$H$-\bf semisaturated} (formerly called {\em strongly} saturated) 
  if  $G+e$ contains  more copies of $H$ than $G$ does for 
   $\forall e \in E(\overline{G})$. 
Let $ {\rm ssat}(n,H)$ be the  minimum size of an $H$-semisaturated graph.
Obviously, $ {\rm ssat} (n,H) \leq  {\rm sat} (n,H).$

It is known that  $  { {\rm ssat} (n, K_p)} =  {\rm sat} (n, K_p) $
    (it follows from Frankl/Alon/Kalai  generalizations of Bollob\' as set pair theorem) 
   and   $ { {\rm ssat} (n, C_4)} =  {\rm sat} ( n, C_4)$ (Tuza~\cite{Tuza89}).
Below we have a $C_5$-semisaturated graph on $1+8t$ vertices and $11t$ edges. 
Every vertex can be reached by a path of length 2 from $y$. 
         \begin{figure}[htp]
    \includegraphics[scale=0.4]{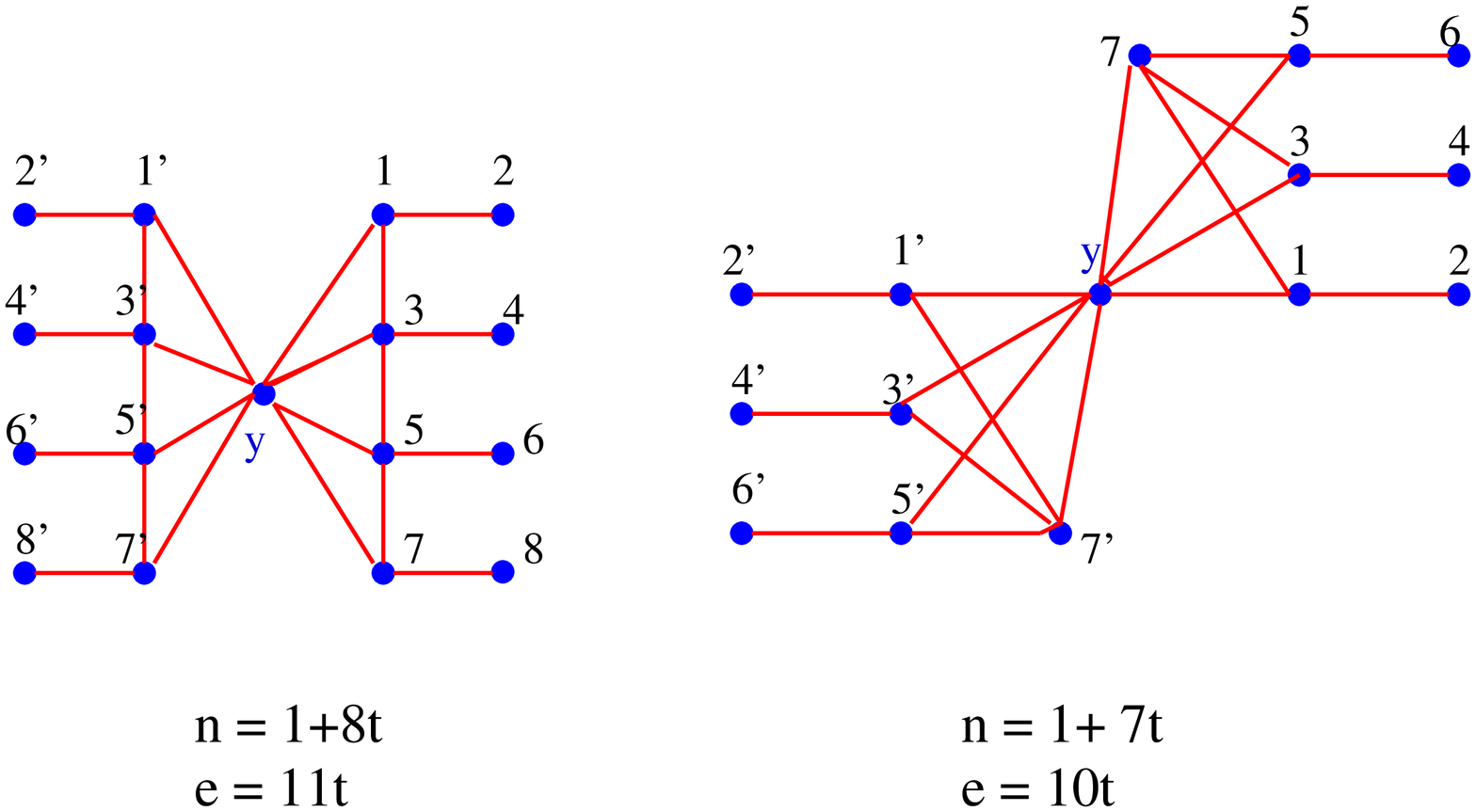}
        \end{figure}
Joining one, two or three triangles to the central vertex $y$
 one obtains $C_5$ semisaturated graphs with $8t+3$, $8t+5$, or $8t+7$
 vertices and $11t+3$, $11t+6$, or $11t+9$ edges, resp. 
Leaving out a pendant edge, we can extend these constructions for
 even values of $n$ 
\begin{equation}\label{eq:C5}
 \ssat(n,C_5)\leq \big\lceil \frac{11}{8}(n-1)\big\rceil
    \, \text{ for all \, } n\geq 5.
 \end{equation}
The picture on the right is the extremal $C_5$-saturated graph by (\ref{fur:eqC5}).
\begin{conj}
       \quad $  {\rm ssat} (n,C_5) = \dfrac{11}{8}n + O(1)$. 
Maybe equality holds in  {\rm (\ref{eq:C5})} for $n> n_0$.  
 \end{conj}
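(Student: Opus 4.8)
\medskip

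\noindent\textbf{Proof proposal.} Since the upper bound is the construction behind (\ref{eq:C5}), what remains is to prove $\ssat(n,C_5)\ge \tfrac{11}{8}n-O(1)$, i.e. that every $C_5$-semisaturated graph $G$ on $n$ vertices satisfies $2m\ge \tfrac{11}{4}n-O(1)$ with $m=|E(G)|$. The first step is to put the hypothesis into usable form: adding a non-edge $uv$ creates a \emph{new} $C_5$ iff that $C_5$ passes through $uv$, hence iff $G$ already contains a $u$–$v$ path of length exactly $4$ (a $P_5$ with ends $u,v$); so $G$ is $C_5$-semisaturated precisely when every non-adjacent pair of vertices is joined by a $P_5$. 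No general lower-bound method in the literature reaches average degree $\tfrac{11}{4}$ for $C_5$-semisaturation, so a dedicated argument in the style of Ya-Chen~\cite{fur:yachen_C5,fur:yachen_C5all} will be needed. A crude count already explains the shape of the extremal example: choosing the middle vertex and then expanding outward shows $G$ has at most $O(\Delta^3 m)$ copies of $P_5$, while $\tfrac{11}{8}n$ edges leave $\Omega(n^2)$ non-adjacent pairs to be witnessed and each $P_5$ witnesses only its own pair of endpoints; hence some vertices must have large degree, which is exactly why the construction is organised around one high-degree hub $y$.

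\medskip

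The core is a local analysis of vertices of degree at most $2$. If $v$ has degree $1$ with neighbour $w$, then, because $v$ is non-adjacent to every vertex $\ne w$, the vertex $w$ must reach each such vertex by a path of length exactly $3$ that avoids $v$; in particular no two degree-$1$ vertices can share a neighbour (a $P_5$ between them would force a repeated vertex at the common neighbour), and the first two neighbourhoods of $w$ must be densely connected. If $v$ has degree $2$ with neighbours $w_1,w_2$, then every vertex outside $\{v\}\cup N(v)$ is reached by a length-$3$ path avoiding $v$ from $w_1$ or from $w_2$. In either case one extracts from these certifying paths a bounded \emph{witness set} $S_v\ni v$ (the ends of the short paths together with one or two intermediate layers) and, by running through the finitely many admissible local configurations, shows $\sum_{x\in S_v}\bigl(\deg(x)-\tfrac{11}{4}\bigr)\ge 0$: the degree deficiency at $v$ is paid for inside $S_v$. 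The tightness assertion of the conjecture would emerge from this case check, with the unique extremal local configuration being a copy of the $8$-vertex, $11$-edge gadget used in the construction for (\ref{eq:C5}).

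\medskip

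The global step is a discharging (equivalently, fractional-covering) argument. Assign $x$ the charge $\deg(x)-\tfrac{11}{4}$, so the total is $2m-\tfrac{11}{4}n$ and it suffices to show this is $\ge -O(1)$. Vertices of degree $\ge 3$ have nonnegative charge, so only the degree-$1$ and degree-$2$ vertices need to be absorbed, and the witness sets $S_v$ do exactly that. What remains is to choose the $S_v$ so that every vertex lies in boundedly many of them and — the delicate point — so that overlaps cannot break the inequality. An overlap at a high-degree vertex is harmless: its excess $\deg(x)-\tfrac{11}{4}$ is large and it is spent only once in the global sum; the real danger is a low-degree vertex lying in two distinct witness sets, and one shows the rigid structure forced around a degree-$\le 2$ vertex rules this out except on an $O(1)$-sized part of $G$ (essentially only near the hub). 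Summing the local inequalities then yields $2m\ge \tfrac{11}{4}n-O(1)$.

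\medskip

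I expect this last, global, step to be the main obstacle, and it is presumably why the statement is only conjectured here. The constraint ``a $P_5$ between every non-adjacent pair'' is global and parity-sensitive, so a vertex that is cheap to host locally may be forced onto many different certifying paths and hence into many witness sets; making the charge accounting robust against such sharing is the crux. Finally, to upgrade $\tfrac{11}{8}n+O(1)$ to the exact value $\lceil\tfrac{11}{8}(n-1)\rceil$ for $n>n_0$, one adds a stability argument: any $G$ within $O(1)$ of the bound must have all of its nontrivial structure equal to vertex-disjoint copies of the $8$-vertex gadget attached at a common vertex $y$ with ``every vertex within a path of length $2$ from $y$'', after which one optimises over the number of small triangles hung at $y$ and the pendant-edge parity correction exactly as in the construction preceding (\ref{eq:C5}).
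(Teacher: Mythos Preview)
The statement you are addressing is a \emph{conjecture} in the paper, not a theorem; the paper offers no proof of it, only the construction establishing the upper bound~(\ref{eq:C5}) and the general lower bound $\ssat(n,C_5)\ge \tfrac{9}{8}n-O(1)$ obtained from Lemma~\ref{le:lower} in Section~\ref{s:ssat_lower3}. So there is nothing in the paper to compare your argument against, and indeed you correctly flag that ``the statement is only conjectured here.''

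As an outline, your discharging scheme is the natural thing to try, and the preliminary observations you make (the $P_5$-between-nonadjacent-pairs reformulation, the fact that degree-$1$ vertices have distinct neighbours) are correct and match Lemma~\ref{le:degree1}. But the proposal is not a proof: the heart of the matter is the claimed local inequality $\sum_{x\in S_v}\bigl(\deg(x)-\tfrac{11}{4}\bigr)\ge 0$ for each low-degree vertex $v$, and you neither define $S_v$ precisely nor carry out any of the ``finitely many admissible local configurations.'' For $C_5$ the configurations are genuinely delicate --- the exact constant $\tfrac{11}{8}$ is tied to the specific $8$-vertex gadget, and nothing short of an explicit case analysis will pin it down. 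Your own diagnosis is accurate: the overlap control in the discharging is where the real work lies, and until that is done the lower bound remains open. In short, your write-up is a plausible research plan, not a proof, and the paper makes no stronger claim.
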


Since   $11/8=1.375  < 10/7 = 1.42...$ 
  inequalities (\ref{fur:eqC5}) and (\ref{eq:C5}) imply that  
$${\rm ssat} (n,C_5) < 
    {\rm sat} (n,C_5) \, \text{ for all \, } n\geq 21.
 $$   
Our next Theorem shows that a similar statement holds for every cycle $C_k$
 with $k> 12$ (and probably for $k\in \{ 6, 7, \dots, 12\}$, too). 

\begin{thm} \label{th:2}
        \quad For all $n\geq k \geq 6$ 
\begin{equation}\label{eq:th2}
  \left(1 + \frac{1}{2k-2}\right)n - 2 < {\rm ssat} (n,C_k) < 
    \left(1 + \frac{1}{2k-10}\right)n + k-1. 
  \end{equation}
 \end{thm}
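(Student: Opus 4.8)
The plan is to mirror the structure of the $\sat(n,C_k)$ analysis. For the \emph{upper bound} $\ssat(n,C_k) < (1+\frac{1}{2k-10})n + k-1$, I would construct a $C_k$-semisaturated graph whose ``cheap'' building block is roughly twice as long as in the $H_{k,n}$ construction. Concretely, replace the paths $P_\alpha$ of length $k-3$ between $a_1$ and $a_2$ by paths of length about $2k-11$ (so each block $R_\alpha$ contributes about $2k-10$ new vertices and one more edge than vertices), while keeping a small dense core on $Q=A\cup B\cup C\cup D$ that provides connecting paths $T_\ell$ of every length $\ell$ in a suitable range, exactly as in \eqref{eq:Tl}. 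Since we only need \emph{semi}-saturation, the graph is allowed to contain copies of $C_k$; this is what lets the blocks be longer than $k$. First I would give the explicit vertex/edge description and count edges to extract the constant $\frac{1}{2k-10}$; then I would verify semisaturation by the same four-case case analysis on an added edge $e$ — $e$ inside one long block, $e$ between two blocks, $e$ between a block and the core, and $e$ inside the core — showing in each case that $G+e$ creates a \emph{new} copy of $C_k$ using a split of the relevant path plus an appropriate short connector $T_\ell$. A subtlety to handle carefully is the parity/divisibility bookkeeping: $n$ must be written as (core size) $+$ (leftover $r$) $+ t\cdot(2k-10)$, and the pending ``$r$'' vertices need to be attached so that they too are reachable by the right short paths; the additive error term $k-1$ absorbs the core and the leftover.

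For the \emph{lower bound} $\ssat(n,C_k) > (1+\frac{1}{2k-2})n - 2$, I would adapt the counting argument that gives the lower bound in Theorem~\ref{th:1} (postponed in the excerpt to Section~\ref{s:sat_lower}), weakened appropriately for semisaturation. The idea: let $G$ be $C_k$-semisaturated with $n$ vertices and $m$ edges. The semisaturation hypothesis forces that for every non-edge $uv$, adding $uv$ creates a $C_k$, i.e.\ there is a $u$–$v$ path of length $k-1$ in $G$; in particular $G$ is connected and every vertex lies on, or is close to, long paths. One then bounds the number of low-degree vertices: vertices of degree $1$ (or more generally vertices in ``sparse'' tree-like pieces) must each be attached to the rest of the graph in a way that, by the path-of-length-$(k-1)$ requirement, costs enough edges elsewhere. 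Counting $\sum_v \deg(v) = 2m$ and charging the deficit $2m - 2n$ against the structure, the length-$(k-1)$ path constraint yields that a long path can ``absorb'' at most about $2k-2$ degree-two vertices before it must branch or close, which is the source of the denominator $2k-2$; the $-2$ is slack from the endpoints/first block.

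The main obstacle I expect is the lower bound, specifically making the charging argument tight enough to get the denominator $2k-2$ rather than something weaker like $4k$: one has to argue that a maximal path of consecutive degree-$2$ vertices really can be as long as $\approx 2k-2$ in a semisaturated graph (this is exactly what the matching construction shows, so the bound should be essentially tight) and that no two such long ``threads'' can share the savings. I would handle this by a discharging scheme in which each degree-$2$ vertex sends charge along its thread to the nearest branch vertex or to a short cycle, then bound the number of threads by the number of branch vertices, which in turn is controlled by $2m-2n$; the semisaturation condition enters precisely to forbid threads longer than the construction allows, because a too-long induced path would have a non-edge whose addition cannot create any $C_k$. The upper-bound verification is routine by comparison — the only place to be careful there is checking that the longer blocks do not accidentally destroy $C_k$-freeness in a way that matters (it does not, since we no longer need $C_k$-freeness) and that the short connectors $T_\ell$ from the core still span the full required length interval once the blocks are longer.
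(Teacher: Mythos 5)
Your upper-bound construction has a genuine flaw. If the blocks are bare $a_1$--$a_2$ paths of length about $2k-11$, take two interior vertices $u,v$ of the \emph{same} block at distance $d$ along it, with $p=\mathrm{dist}(a_1,u)$, $q=\mathrm{dist}(v,a_2)$, $p+d+q=2k-11$. Every cycle of $G+uv$ through the new edge has length either $d+1$ (chord plus the subpath) or $p+q+1+\ell=2k-10-d+\ell$, where $\ell$ is the length of an $a_1$--$a_2$ connector avoiding this block ($\ell\in\{1,\dots,k-2\}$ from the core, or $\ell\geq 2k-11$ through another block). A new $C_k$ thus requires $d=k-1$ or $d\geq k-9$; so a chord with $2\leq d\leq k-10$ (which exists once $k\geq 12$) creates no new $k$-cycle, and the graph is not semisaturated. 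The paper's construction $H^3_{k,n}(G)$ obtains the denominator $2k-10$ by a different mechanism: the $a_1$--$a_2$ paths stay short (only $k-5$ interior vertices), and a \emph{matching of pendant ``spike'' vertices} is attached to the path vertices, so each block has $2k-10$ vertices but only $2k-9$ edges. Since a spike is at distance one from a path vertex, any new edge meeting it produces $a_i$-paths of controlled length, and the core is required to be $\{k,k+2\}$-suitable (condition (S3)$^+$; the $k+2$ case absorbs the two extra spike edges). The structural point you missed is that semisaturation, unlike saturation, tolerates many degree-one vertices --- not longer threads.

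Correspondingly, your lower-bound sketch locates the constant $2k-2$ in the wrong place, and the key lemma is missing. In the extremal examples there are no threads of $\approx 2k-2$ consecutive degree-two vertices (the block vertices have degree $3$ and the spikes degree $1$); indeed a chord argument as above shows degree-two threads in a semisaturated graph can only have length about $k$, so a ``threads of length at most $2k-2$'' discharging cannot be the route, and by itself it would not even reach the stated constant once pendant vertices are present. The paper's proof splits the two phenomena: by Lemma~\ref{le:degree1} the degree-one vertices $X$ have pairwise distinct neighbors of degree at least $3$, so $|X|\leq n/2$ and $G\setminus X$ is still $C_k$-semisaturated with minimum degree $2$; then Lemma~\ref{le:lower} (proved by growing subgraphs $G_1\subset G_2\subset\dots$ from a shortest cycle, of length at most $k+1$ by Claim~\ref{cl:8}, adding ``short returning paths'' so that each step gains at least $\tfrac{k}{k-1}$ edges per new vertex) gives
\begin{equation*}
e(G)\;\geq\; |X| + \frac{k}{k-1}\bigl(n-|X|\bigr)-\frac{k+1}{k-1}\;\geq\; \Bigl(1+\frac{1}{2k-2}\Bigr)n-\frac{k+1}{k-1},
\end{equation*}
the minimum being attained at $|X|=n/2$. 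Your proposal would still need an analogue of Lemma~\ref{le:lower} for the pendant-free part; without it, and with the incorrect thread-length mechanism, the lower bound does not go through.
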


The proof of the lower bound is postponed to Section~\ref{s:ssat_lower3}. 
The construction yielding the upper bound is presented in the next two sections
 where we describe a way to improve the $O(k)$ term as well as
 give better constructions for $k=6$. 
We believe that our constructions are essentially optimal.
 \begin{conj}\label{conj:2}\quad 
 There exists a $k_0$ such that \enskip 
 ${\rm ssat}(n,C_k)= \left(1+ \dfrac{1}{2k-10}\right)n+O(k) $\enskip
holds for each $k> k_0$. 
 \end{conj}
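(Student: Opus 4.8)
Because the upper bound in~(\ref{eq:th2}) already matches the claimed main term, proving Conjecture~\ref{conj:2} amounts to establishing the matching lower bound $\ssat(n,C_k)\geq \left(1+\frac{1}{2k-10}\right)n-O(k)$ for all $k>k_0$; equivalently, it means sharpening the constant in the denominator of the lower bound of Theorem~\ref{th:2} from $2k-2$ to $2k-10$, a gain of $8$. The plan is to keep the counting skeleton that yields the weaker bound but to replace the crude local estimate by a tight structural analysis of the cheap part of the graph. First I would record the reformulation that drives everything: a graph $G$ is $C_k$-semisaturated if and only if every pair of non-adjacent vertices $u,v$ is joined by a path of length exactly $k-1$, since such a path together with the added edge $uv$ is precisely a new copy of $C_k$ through $uv$. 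In particular $G$ must be connected, so $m=n-1+c$ where $c$ is the cyclomatic number, and the target becomes $c\geq (n-O(k))/(2k-10)$.

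Next I would set up the degree count. Writing $n_i$ for the number of vertices of degree $i$, we have $2m=\sum_i i\,n_i\geq 3n-2n_1-n_2$, so it suffices to bound the weighted count $2n_1+n_2$ of low-degree vertices by $n-\frac{n}{k-5}+O(k)$. To do this I would suppress all degree-$2$ vertices (smoothing), producing a reduced multigraph $G'$ on the vertices of degree $\geq 3$ together with the leaves, in which each edge is labelled by the length of the \emph{thread} (maximal path of internal degree-$2$ vertices) it replaces, with isolated cycles handled separately. The excess $m-n+1$ equals the cyclomatic number of $G'$, so the statement to prove becomes: each independent cycle of $G'$ can be charged at most $2k-10+O(1)$ vertices of $G$.

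The structural core is to bound thread lengths using the path condition. If a thread carries many internal degree-$2$ vertices, pick two of them, $x$ and $y$, far apart and hence non-adjacent; every $x$--$y$ path must leave the thread through its two branch endpoints, so a length-$(k-1)$ connection forces the portion of the thread between $x$ and $y$ plus the shortest admissible detours through $G'$ to total $k-1$. Because a degree-$2$ vertex commits both of its edges, these detours cannot be shortcut, and a careful bookkeeping of how two threads meeting at a common branch vertex \emph{share} their detours is what should yield the factor $2$: a single independent cycle can service threads of total length about $2(k-5)$ before the length constraint can no longer be met. I would make this precise with a discharging argument that sends one unit of charge from each thread vertex to the nearest cycle of $G'$ and shows every cycle receives at most $2k-10+O(1)$ units, after first disposing of the $O(k)$ vertices lying in pendant trees and in short exceptional configurations, which account for the additive error.

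The main obstacle is the exact constant. The weaker bound $2k-2$ in Theorem~\ref{th:2} comes from the easy estimate that each cycle services a thread of length at most roughly $k$ in each of two directions with no interaction; squeezing out the remaining $8$ requires a near-extremal stability statement, namely that any configuration achieving close to $2k-10$ serviced vertices per cycle must look like the block of the optimal construction, and that short ears, chords, and multiply-incident branch vertices cannot be combined to beat it. This delicate case analysis of the possible local pictures at a branch vertex --- together with the interaction of several threads and cycles sharing detours --- is precisely the part not yet under control, which is why the matching lower bound is stated only as a conjecture and is expected to require a lower bound $k_0$ on $k$, with the small cases $6\leq k\leq k_0$ perhaps checkable by computer.
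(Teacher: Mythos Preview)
The statement you were asked to prove is Conjecture~\ref{conj:2}, and the paper contains \emph{no proof of it}: it is presented precisely as an open problem, supported only by the upper bound construction $H^3_{k,n}(G)$ and the weaker lower bound of Theorem~\ref{th:2}. There is therefore nothing in the paper to compare your attempt against.

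Your write-up is not a proof but a strategy sketch, and you are candid about this in the final paragraph. The outline is sensible --- reduce to bounding the cyclomatic number, suppress degree-$2$ threads, and try to charge each independent cycle with at most $2k-10+O(1)$ low-degree vertices via the length-$(k-1)$ path condition --- and it correctly isolates where the difficulty lies: tightening the per-cycle service bound from roughly $2(k-1)$ down to $2(k-5)$ requires controlling how several threads can share detours through a common branch vertex, and no argument for that step is supplied. So what you have written is a plausible plan of attack together with an honest identification of the missing lemma, not a proof; since the paper itself offers no proof either, that is the appropriate status.
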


\section{Constructions of sparse $C_k$-semisaturated graphs}

In this section we define an infinite class of $C_k$-semisaturated graphs, 
 $H_{k,n}^2$ (more precisely $H_{k,n}^2(G)$).
  
Call a graph $G$ $k$-{\it suitable} with special vertices $a_1$ and $a_2$ if 
\newline\noindent${}$(S1)\quad $G$ is $C_k$-semisaturated, 
\newline\noindent${}$(S2)\quad $\exists$ a path  $T_\ell$ in $G$ with endpoints  
 $a_1$ and $a_2$ and of length $\ell$ for all $1\leq \ell\leq k-2$, and
\newline\noindent${}$(S3)\quad 
 for every  $q\in V(G)\setminus \{ a_1, a_2\}$, and integers
 $m_1$ and $m_2$ with  $m_1+m_2=k$ and $2 \leq m_i \leq k-2$ 
  \newline\noindent${}$\quad\quad\enskip 
  $\exists$ an $i\in \{ 1, 2 \}$ and a path $U(a_i,q,m_i)$ of length 
  $m_i$ and with endpoints $a_i$ and $q$. 

For example, it is easy to see, that a {\bf wheel} with $r$ spikes  $W_k^r$ is such a graph, 
 $k\geq r$, $k\geq 4$. 
It is defined by the $(k+r)$-element vertex set 
  $\{ a_1, a_2, \dots, a_{k}, d_1, \dots, d_r\}$ and by $2k-2+r$ edges joining
  $a_1$ to all other $a_i$'s, forming a cycle $a_2a_3\dots a_{k}$ of length $k-1$, 
  and joining each $d_i$ to $a_i$. 

Define the graph $H_{k,n}^2(G)$ as follows, when 
 $n$ is in the form 
 $$n=|V(G)|+t(k-3)
 $$ 
 where $t\geq 0$ is an integer. 
The vertex set $V(H)$ consists of the pairwise disjoint sets
 $Q$ and $R_i$ for $1\leq i\leq t$, 
$V(H) = Q \cup R_1 \cup \dots \cup R_{t}$ 
 where $|Q|=|V(G)|$, $|R_1|=|R_2|= \cdots = |R_t|= k-3$ and
   $A:=\{ a_1, a_2 \}\subset Q$. 
The edge set of $H$ consists of a copy of $G$ with vertex set $Q$, 
  and $t$ paths with endpoints $a_1$ and $a_2$ and vertex sets $A\cup R_\alpha$. 
The number of edges is 
\begin{equation*}  
  |E(H)|=  |E(G)|+t(k-2).  
  \end{equation*} 
It is not difficult to check that, indeed, $H$ is $C_k$-semisaturated, 
 the details are similar (but much simpler) to those in  Section~\ref{s:Hk}, 
 so we do not repeat that proof.

Finally, considering $H_{k,n}^2(W_k^r)$ (where now $4\leq r\leq k$) we obtain that
 for all $n\geq k+4$
\begin{equation}\label{eq:6}
    {\rm ssat} (n,C_k) \leq  n+
    \big\lfloor \frac{n-7}{k-3}\big\rfloor + k-3. 
  \end{equation}
\begin{cor}\label{cor:C5}\quad 
 ${\rm ssat} (n,C_6) \leq 
    \big\lceil \dfrac{4}{3}n\big\rceil.$ 
  \end{cor}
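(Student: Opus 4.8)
The plan is to specialize the general bound (\ref{eq:6}) to the case $k=6$ and then to patch up the small values of $n$ not covered by that estimate. Setting $k=6$ in (\ref{eq:6}) gives
$$
 {\rm ssat}(n,C_6)\leq n+\big\lfloor\tfrac{n-7}{3}\big\rfloor+3
$$
for all $n\geq 10$. A short calculation shows $n+\lfloor (n-7)/3\rfloor+3\leq\lceil 4n/3\rceil$: indeed $\lfloor (n-7)/3\rfloor\leq (n-7)/3$, so the left side is at most $n+(n-7)/3+3=(4n+2)/3\leq\lceil 4n/3\rceil$ once one checks the rounding in each residue class of $n$ modulo $3$. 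So the corollary holds for all $n\geq 10$ directly from (\ref{eq:6}).

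First I would verify that $W_6^r$ is genuinely $6$-suitable for $4\leq r\leq 6$ — this is the one place where the hypotheses $r\geq 4$ and $r\leq k$ in the displayed construction get used, and it is worth a sentence since (S3) requires, for each vertex $q$ and each split $m_1+m_2=6$ with $2\leq m_i\leq 4$ (i.e.\ $(2,4)$, $(3,3)$, $(4,2)$), a path of the right length from $a_1$ or from $a_2$ to $q$; the rim cycle $a_2\dots a_6$ has length $5$ and the hub $a_1$ is adjacent to everything, which makes all of (S1)–(S3) routine. Then I would plug $G=W_6^r$ into the edge count $|E(H)|=|E(G)|+t(k-2)$ with $|E(W_6^r)|=2k-2+r=10+r$ and $k-2=4$, and optimize the choice of $r\in\{4,5,6\}$ against the residue $n-|V(W_6^r)|=n-(6+r)$ modulo $k-3=3$ so that $t$ is a nonnegative integer; this is exactly the bookkeeping already encoded in (\ref{eq:6}).

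The remaining task is the finite range $k+4=10 > n\geq 6$, i.e.\ $n\in\{6,7,8,9\}$ (and possibly a couple more depending on how tight one wants to be), where (\ref{eq:6}) does not apply. Here I would simply exhibit explicit small $C_6$-semisaturated graphs: for $n=6$ a graph on $6$ vertices with $\lceil 24/3\rceil=8$ edges, and similarly $\lceil 28/3\rceil=10$, $\lceil 32/3\rceil=11$, $\lceil 36/3\rceil=12$ edges for $n=7,8,9$. A natural choice is $K_4$ with a few pendant paths/edges attached, or a small wheel-like graph, checked by hand to be $C_6$-free with the property that any added edge creates a new $C_6$ (for semisaturation it suffices that every pair of non-adjacent vertices is joined by a path of length $5$). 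I expect the main obstacle to be nothing conceptual but rather this case analysis for small $n$ together with getting the floor/ceiling arithmetic in the reduction from (\ref{eq:6}) to $\lceil 4n/3\rceil$ exactly right in each residue class; both are elementary but must be done carefully. \qed
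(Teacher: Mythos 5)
Your proposal follows exactly the paper's route: Corollary~\ref{cor:C5} is nothing more than (\ref{eq:6}) specialized to $k=6$, and that is the heart of what you do. Two small points of care. First, your displayed chain $n+\lfloor(n-7)/3\rfloor+3\le n+(n-7)/3+3=(4n+2)/3\le\lceil 4n/3\rceil$ is literally false when $3\mid n$ (then $(4n+2)/3>\lceil 4n/3\rceil$); the correct and cleaner statement, which your ``check each residue class'' remark amounts to, is that computing $\lfloor(n-7)/3\rfloor$ exactly in each class gives $n+\lfloor(n-7)/3\rfloor+3=\lceil 4n/3\rceil$ for every $n$, so (\ref{eq:6}) yields the corollary at once for $n\ge 10$. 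Second, the patching of $6\le n\le 9$ is not part of the paper's argument (the corollary is stated as a consequence of (\ref{eq:6}), which assumes $n\ge k+4=10$), and in your write-up those cases are only promised, not proved --- note also that semisaturated graphs need not be $C_6$-free, which makes such small examples easier than your phrasing suggests; either restrict the range to $n\ge 10$ or actually exhibit and verify the small graphs.
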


\section{Thinner constructions of sparse $C_k$-semisaturated graphs}

In this section we define another infinite class of $C_k$-semisaturated graphs, 
 $H_{k,n}^3$ (more precisely $H_{k,n}^3(G)$)
 yielding the upper bound (\ref{eq:th2}) in Theorem~\ref{th:2}.

Call a graph $G$ $\{ k. k+2\}$-{\it suitable} with special vertices $a_1$ and $a_2$ if 
 (S1) and (S2) hold but (S3) is replaced by the following
\newline\noindent${}$(S3)${}^+$\quad 
 for every  $q\in V(G)\setminus \{ a_1, a_2\}$, and integers $m_1, m_2$
  either there exists a path $U(a_1,q,m_1)$ 
 \newline\noindent${}$\quad\quad\enskip   
  (of length $m_1$ and with endpoints $a_1$ and $q$) 
  or a path $U(a_2,q,m_2)$ in the following cases
 \newline\noindent${}$\quad\quad\enskip    $m_1+m_2=k$ and $3 \leq m_i \leq k-3$, 
 \newline\noindent${}$\quad\quad\enskip    $m_1+m_2=k+2$ and $4 \leq m_i \leq k-4$.  
 \newline
It is easy to see, that the wheel $W_k^r$ with $r$ spikes is such a graph, 
 $k\geq r\geq 0$, $k\geq 4$.

Define the graph $H_{k,n}^3(G)$ as follows, when 
 $n$ is in the form 
\begin{equation}\label{eq:n}
  n=|V(G)|+t(2k-10)-r
 \end{equation} 
 where $t\geq 2$ is an integer and $0\leq r < 2k-10$. 
The vertex set $V(H)$ consists of the pairwise disjoint sets
 $Q$, $R_i$ and $D$ for $1\leq i\leq t$, 
$V(H) = Q \cup R_1 \cup \dots \cup R_{t}\cup D$ 
 where $|Q|=|V(G)|$, $|R_1|=|R_2|= \cdots = |R_t|= k-5$, $|D|=t(k-5)-r$ and
   $A:=\{ a_1, a_2 \}\subset Q$. 
The edge set of $H$ consists of a copy of $G$ with vertex set $Q$, 
  and $t$ paths with endpoints $a_1$ and $a_2$ and vertex sets $A\cup R_\alpha$
   and finally $|D|$ spikes, a matching with edges from $\cup R_\alpha$ to $D$.

  \begin{figure}[htp]
        \centering
    \includegraphics[scale=0.4]{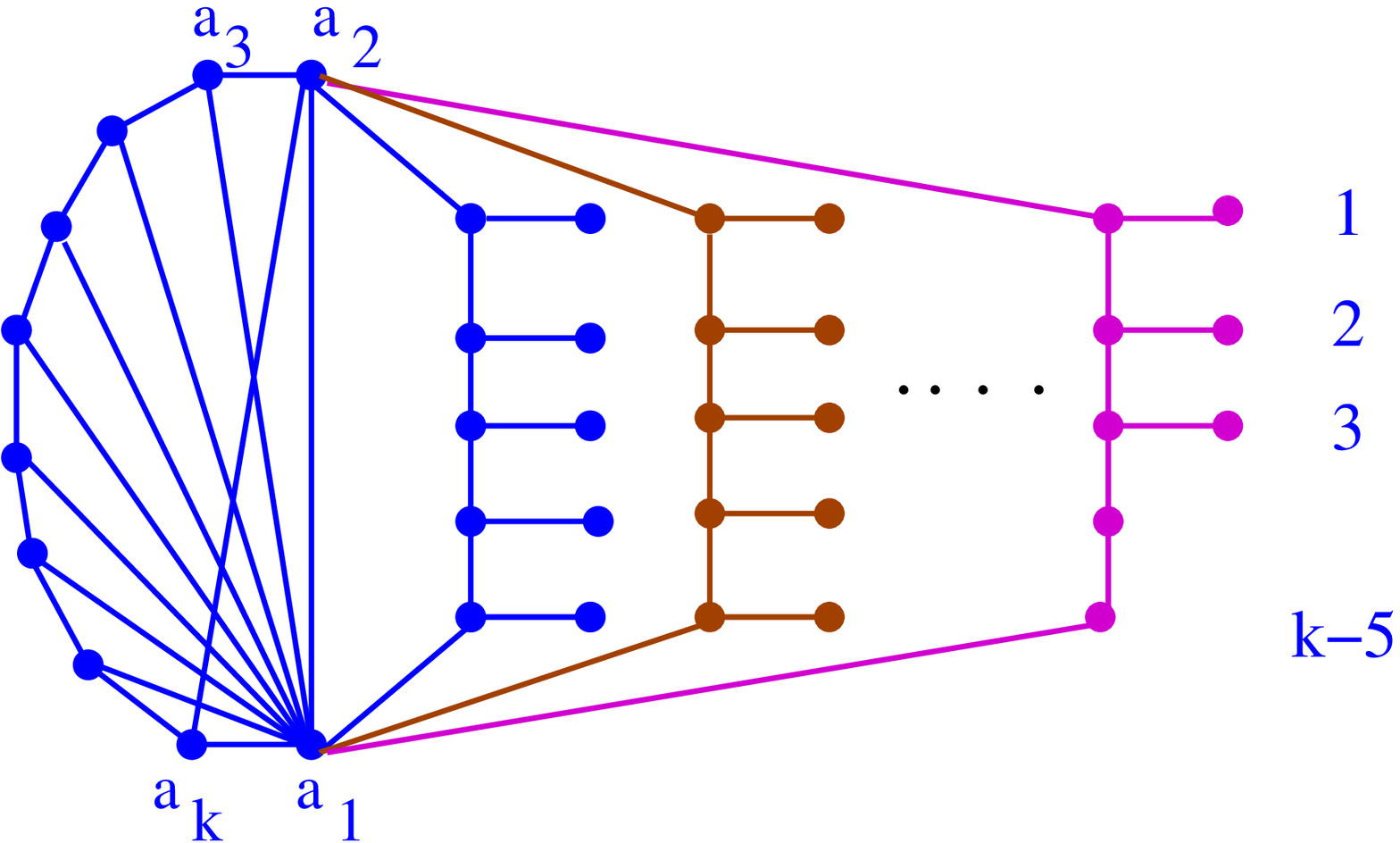}
    \end{figure}

The number of edges is
\begin{equation}\label{eq:C3}
  |E(H)|=  |E(G)|+t(2k-9)-r.  
  \end{equation} 
It is not difficult to check that $H$ is $C_k$-semisaturated, 
 the details are  similar (but simpler) to those in  Section~\ref{s:Hk}.
As an example we present one case.

Add the edge $qd$ to $H$ where $q\in V(G)\setminus \{ a_1, a_2\}$ and $d\in D$.
Let us denote the (unique) neighbor of $d$ by $x$, $x\in R_\alpha$.  
The distance of $x$ to $a_1$ is denoted by $\ell$. 
Then the length of the $qdx\dots a_1$ path is $\ell+2\geq 3$ and the
    length of the $qdx\dots a_2$ path is $(k-4-\ell)+2\geq 3$ and one can find a 
    $C_k$ through $qd$ using property (S3)${}^+$.

Considering $H_{k,n}^3(W_k)$ (with $t\geq 2$) we obtain from (\ref{eq:n}) and (\ref{eq:C3}) that 
 for all $n\geq 3k-9$
\begin{equation}\label{eq:10}
    {\rm ssat} (n,C_k) \leq 
    \big\lceil \left(1 + \frac{1}{2k-10}\right)(n-k)\big\rceil + 2k-2. 
  \end{equation}
Using $H^2(k,n)$, it is easy to see that (\ref{eq:10}) holds for all $n\geq k$, leading to
 the upper bound in (\ref{eq:th2}).

One can slightly improve (\ref{eq:6}) and (\ref{eq:10}) if there are 
 special graphs thinner than the wheel $W_k$. 
\begin{prob}\label{prob:special}\quad 
Determine $s(k)$, the minimum size of a $k$-vertex $k$-special graph (i.e., 
one satisfying {\rm (S1)--(S3)}).
Determine $s'(k)$, the minimum size of a $k$-vertex $\{k, k+2\}$-special graph (i.e., 
one satisfying {\rm (S1), (S2)} and {\rm (S3)${}^+$}).
  \end{prob}

\section{Degree one vertices in (semi)saturated graphs}\label{s:ssat_lower1}

Suppose that $G$ is a $C_k$-semisaturated graph where $k\geq 5$, $|V(G)|= n \geq k$. 
Obviously, $G$ is connected. 
Let $X$ be the set of vertices of degree one,  $X := \{ v \in V(G) : \deg_G(v)=1 \}$, 
 its size is $s$ and its elements are denoted as $X = \{ x_1, x_2, \cdots , x_s \}$. 
Denote the neighbor of $x_i$ by $y_i$, $Y:=\{y_1, \dots, y_s\}$ and let $Z:=
 V(G)\setminus (X\cup Y)$.
We also denote the neighborhood of any vertex $v$ by $N_G(v)$ or briefly by $N(v)$. 

\begin{lem}\label{le:degree1} {\rm  (The neighbors of degree one vertices.)}\newline
(i) \quad    $y_i \neq y_j$ for $1 \leq i \neq j \leq s$, so $|Y|=|X|$.   \newline
(ii) \enskip $\deg(y)\geq 3$ for every $y\in Y$,  \newline
(iii) \,       if  $\deg_G(x)=1$, then  $G - \{ x \}$ is also a $C_k$-semisaturated  graph. 
\end{lem}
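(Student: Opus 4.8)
The plan rests on one reformulation of the hypothesis: because the only new edge of $G+uv$ is $uv$ itself, every extra copy of $C_k$ must contain $uv$, so $G$ being $C_k$-semisaturated is equivalent to the statement that \emph{every non-adjacent pair $u,v$ is joined in $G$ by a path of length exactly $k-1$}. I will use two immediate consequences throughout. First, $G$ is connected, since a pair of vertices in distinct components admits no joining path of any length. Second, if $\deg_G(x)=1$ with neighbour $y$, then any path that meets $x$ must have $x$ as an endpoint and use $xy$ as its first edge; in particular a $u$--$v$ path with $u,v\neq x$ cannot pass through $x$ at all.

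For (i) I would argue by contradiction: if $y_i=y_j=:y$ with $i\neq j$, then $x_i$ and $x_j$ are non-adjacent, so there is an $x_i$--$x_j$ path of length $k-1$ in $G$; but such a path must begin $x_i,y,\dots$ and end $\dots,y,x_j$, forcing $y$ to repeat unless the path is $x_iyx_j$, which has length $2<k-1$. For (ii) I would fix $y=y_i\in Y$ and suppose $\deg_G(y)\leq 2$: if $\deg_G(y)=1$ then $\{x_i,y\}$ is an entire component, so connectedness gives $n=2<k$, impossible; if $\deg_G(y)=2$, let $w$ be the neighbour of $y$ other than $x_i$ (so $w\notin\{x_i,y\}$), and note that $x_iw$ is a non-edge whose only joining path is $x_iyw$, of length $2$, again contradicting the existence of a joining path of length $k-1\geq 4$. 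Combined with (i) this yields $|Y|=|X|=s$.

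For (iii) I would set $G':=G-x$ and verify the definition directly. Given an arbitrary non-edge $uv$ of $G'$, we have $u,v\neq x$ and $uv$ is also a non-edge of $G$, so the hypothesis furnishes a $u$--$v$ path $P$ of length $k-1$ in $G$; by the leaf observation $P$ avoids $x$, hence $P$ lies in $G'$, and $P$ together with $uv$ is a copy of $C_k$ in $G'+uv$ through the edge $uv$. Thus $G'+uv$ has strictly more copies of $C_k$ than $G'$, and since $uv$ was arbitrary, $G'$ is $C_k$-semisaturated. I do not anticipate a real obstacle in any of this; the only points that merit a word of care are the degenerate case $n=k$ in (iii), where $G'$ turns out to be complete (hence has no non-edge and is vacuously semisaturated), and the helpful reminder that semisaturation, unlike saturation, imposes no $C_k$-freeness requirement, so nothing of that kind has to be checked for $G'$.
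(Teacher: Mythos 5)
Your proposal is correct and follows essentially the same route as the paper: both (i) and (ii) are handled by observing that the non-edges $x_ix_j$ (resp.\ $x_iw$) could only be completed to a cycle of length $2$ (resp.\ $3$), so no new $C_k$ arises, and (iii) is the same leaf-avoidance argument the paper dismisses as obvious. Your write-up merely spells out the path-of-length-$(k-1)$ reformulation, the connectedness remark, and the degenerate $\deg(y)=1$ and complete-$G'$ cases that the paper leaves implicit.
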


\noindent
{\sl Proof.}\quad
If $y_i=y_j$,  then the addition of  $x_ix_j$ to $G$ does not create a new $k$-cycle.
If $\deg(y_i)=2$ and $N(y_i)=\{ x_i,w\}$, the addition of $x_iw$ to $G$ does not 
 create a new $k$-cycle.
Finally, (iii) is obvious. \qed

Split $Y$ and $Z$ according to the degrees of their vertices. Thus, divide $V(G)$ into 
 five parts $\{X, Y_3, Y_{4+}, Z_2, Z_{3+} \}$, \newline\indent
$ Y_3 : = \{ v \in Y : \deg_G(v)=3\}$ and \enskip
$ Y_{4+} : = \{ v \in Y : \deg_G(v) \geq 4 \}$, \newline\indent
$ Z_2 : = \{ v \in Z : \deg_G(v)=2  \}$ and \enskip
$ Z_{3+} : = \{ v \in Z : \deg_G(v) \geq 3\}$.

\begin{lem}\label{le:structure} {\rm  (The structure of $C_k$-saturated graphs. See~\cite{BCE})}. \newline
 Suppose that $G$ is a $C_k$-saturated graph (and $k\geq 5$). Then\newline 
(iv) \quad  if  $x_iy_iw$ is a path in $G$ (with $x_i\in X$, $y_i\in Y$), then
  $\deg(w)\geq 3$.
  So there are no edges from $Z_2$ to $Y$ (or to $X$).
     \newline
(v) \enskip  If $y_iy_j$ is an edge of $G$ (with $y_i, y_j\in Y$), then
$\deg(y_i)\geq 4$. 
  So there are no edges in $Y_3$, no edges from $Y_3$ to $Y_4$.
In other words, every $y\in Y_3$ has one neighbor in $X$ and two in $Z_{3+}$. 
     \newline
(vi) \enskip 
The induced graph $G[Z_2]$ consists of paths of length at most $k-2$. 
\qed
\end{lem}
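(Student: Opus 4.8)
The plan is to prove each of the three claims (iv), (v), (vi) by the standard saturation trick: assuming the contrary, we exhibit a non-edge $e$ whose addition creates no new copy of $C_k$, contradicting $C_k$-saturation. Throughout I will use Lemma~\ref{le:degree1}: the neighbors of distinct degree-one vertices are distinct, and every $y\in Y$ has degree $\geq 3$. The governing principle is that a degree-one vertex $x$ with neighbor $y$ lies on a copy of $C_k$ only if that cycle passes through the edge $xy$, and then $x$ must also be adjacent on the cycle to some second neighbor of $x$; since $x$ has no other neighbor, the cycle must use a newly added edge at $x$. I will repeatedly use this to localize where a new cycle, if any, must be.

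For (iv): suppose $x_iy_iw$ is a path with $\deg(w)=2$, say $N(w)=\{y_i,w'\}$. Add the edge $e=x_iw'$. Any new $C_k$ must use $e$, hence must pass through $x_i$; since $x_i$'s only old neighbor is $y_i$, the cycle contains the path $w'\,x_i\,y_i$. But $w'$ has only the neighbors $w$ and $x_i$ available (after adding $e$ its neighbors are $w$ and $x_i$, if $\deg(w')$ was... ) — more carefully, the cycle through $e$ enters $x_i$ via $e$ from $w'$ and leaves via $y_i$, so it contains $w'x_iy_i$; following from $w'$ the cycle must continue to its other neighbor, and if that neighbor is $w$ then from $w$ it must go to $y_i$ (its only remaining neighbor), closing a $4$-cycle $w'x_iy_iw$, which has length $4\neq k$ since $k\geq 5$; so in fact one deduces the new cycle cannot exist unless $w'$ has a neighbor other than $w$, and then we instead add $e=x_iw$, forcing the cycle to contain $w\,x_i\,y_i$ and hence to enter $w$ from its only other neighbor $w'$, giving the same contradiction. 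The clean statement to nail down is: whichever of the two edges ($x_iw$ or $x_iw'$) we add, the forced cycle is trapped in the short "pendant path region" and has the wrong length. The corollary (no edges from $Z_2$ to $Y$ or to $X$) is immediate: a $Z_2$-vertex adjacent to $y_i\in Y$ plays the role of $w$; a $Z_2$-vertex adjacent to some $x_i$ would contradict $\deg(x_i)=1$.

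For (v): suppose $y_iy_j$ is an edge with $\deg(y_i)=3$, so $N(y_i)=\{x_i,y_j,z\}$ for some third vertex $z$. Add $e=x_iy_j$ (a non-edge, since $\deg(x_i)=1$ and $x_i\neq$ neighbor of $y_j$ as $x_i\notin Y$). Any new $C_k$ uses $e$ and passes through $x_i$, so contains $y_j\,x_i\,y_i$; but then from $y_i$ the cycle continues to $z$ (its only remaining neighbor, since $y_j$ is already used), and from $y_j$ the cycle must continue without revisiting $x_i$ or $y_i$. The point is that the triangle-like local configuration $x_i,y_i,y_j$ with the new chord forces the would-be cycle to pass through both $x_iy_i$ and $x_iy_j$ and the old edge $y_iy_j$; since $C_k$ has no repeated vertices, a cycle using both $x_iy_i$ and $x_iy_j$ contains $x_i$ once with exactly those two cycle-edges, and then $y_i$–$y_j$ would be the "third side", which is present but makes a triangle $x_iy_iy_j$ — so the new cycle, if it exists, does not use the edge $y_iy_j$, hence it already existed in $G-e$ as a path $y_i\dots y_j$ of length $k-1$ plus... — and here one shows that such a path plus $e$ already gave a $C_k$ via the old edge $y_iy_j$, i.e. $G$ was not $C_k$-free, the contradiction. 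The consequences (no edges inside $Y_3$, none from $Y_3$ to $Y_4$, every $Y_3$-vertex has one neighbor in $X$ and two in $Z_{3+}$) then follow by combining with (iv): the two non-$X$ neighbors of $y\in Y_3$ have degree $\geq 3$ by (v) applied symmetrically and cannot be in $X$ (that would collide with another $x_j$ via Lemma~\ref{le:degree1}(i)) nor in $Y$ (again (v)) nor in $Z_2$ (by (iv)), so they lie in $Z_{3+}$.

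For (vi): the edge set within $Z_2$ induces a subgraph of maximum degree $\leq 2$ (since each vertex there has total degree $2$ in $G$, hence at most $2$ inside $Z_2$), so $G[Z_2]$ is a disjoint union of paths and cycles. It has no cycle: a cycle component in $Z_2$ would be a connected component of $G$ of the wrong size (it would be a $C_m$ with $m\neq k$ if $m<k$, impossible to have $m\geq k$ if it's to be $C_k$-free, and anyway $G$ is connected with $n\geq k$, so a separate component is absurd) — so $G[Z_2]$ is a union of paths. For the length bound: if some path $P$ in $G[Z_2]$ has $\geq k-1$ edges, pick a sub-path $v_0v_1\dots v_{k-2}$ of exactly $k-2$ edges; its endpoints $v_0,v_{k-2}$ are non-adjacent (interior of a longer induced path, and adjacency would make a short cycle inside $Z_2$ contradicting that it's a union of paths), so $e=v_0v_{k-2}$ is a non-edge, and $G+e$ has the new $C_k$ forced to use $e$; but then that cycle, after using $e$, must return from $v_{k-2}$ to $v_0$ through a path avoiding $e$, and since $v_0$ and $v_{k-2}$ each have only one further neighbor (they are interior points of $P$, with neighbors $v_1$ and $v_{k-3}$ respectively along $P$, \emph{unless} they are the actual endpoints of $P$), the cycle is forced to walk along $P$ and close up — one checks the only closed walk available has length exactly $k$ only if it is $v_0v_1\dots v_{k-2}v_0$, but that needs the chord, contradiction; so no new $C_k$ appears, contradicting saturation. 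Hence every path component of $G[Z_2]$ has at most $k-2$ edges.

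The main obstacle is (v): unlike (iv) and (vi), where the pendant/degree-two structure almost mechanically traps the forced cycle, the argument in (v) requires carefully distinguishing whether the hypothetical new $C_k$ uses the old edge $y_iy_j$ or not, and in the "not" case converting it into a pre-existing $C_k$ in $G$ (using that old edge) to contradict $C_k$-freeness rather than saturation. Getting that case analysis airtight — in particular checking that the new edge is genuinely absent from $G$ and that no degenerate short cycle sneaks in when $k$ is small, which is exactly why the hypothesis $k\geq 5$ is needed — is the delicate point; everything else is bookkeeping with Lemma~\ref{le:degree1}.
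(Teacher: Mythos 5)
Your three arguments all rest on the same false premise: that the $C_k$ forced by the added edge is ``trapped'' in the local pendant/degree-two configuration. It is not -- it can escape through the high-degree boundary vertices. Already in (iv): after adding $x_iw$ you correctly force the cycle to contain $y_ix_iww'$, but from $w'$ it simply continues through the rest of $G$ and returns to $y_i$ along some $w'$--$y_i$ path of length $k-3$; saturation in fact \emph{guarantees} such a path exists, so there is no contradiction, contrary to your ``giving the same contradiction''. The missing idea is a conversion step: add $x_iw'$, let $S$ be the forced $y_i$--$w'$ path of length $k-2$ avoiding $x_i$; if $w\notin V(S)$ then $S$ together with the two edges $y_iw$ and $ww'$ is a $C_k$ already present in $G$, contradicting $C_k$-freeness (not saturation), while if $w\in V(S)$ then, since $\deg(w)=2$ and both endpoints $y_i,w'$ of $S$ are its only neighbours, $S$ must be $y_iww'$ of length $2<k-2$. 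You never perform this conversion, and without it (iv) does not close. (The paper itself offers no proof -- it quotes \cite{BCE} -- so the comparison here is purely about correctness.)

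In (v) your conversion is off by one: the forced $y_i$--$y_j$ path has length $k-2$, so closing it with the old edge $y_iy_j$ yields a $C_{k-1}$ in $G$, which is perfectly legal; adding $x_iy_j$ gives no contradiction at all. A working choice is different: add $x_jz$, where $x_j$ is the pendant neighbour of $y_j$ and $z$ is the third neighbour of $y_i$; the forced $y_j$--$z$ path of length $k-2$ cannot pass through $y_i$ (that would force it to be $y_jy_iz$ of length $2$), and closing it through $y_i$ with the two edges $y_jy_i$, $y_iz$ produces a $C_k$ in $G$ -- contradiction. In (vi) the trap-fallacy recurs: after adding $v_0v_{k-2}$ the new cycle need not ``walk along $P$''; it can run from $v_0$ to an end of the $Z_2$-component, exit through that endpoint's neighbour outside $Z_2$, traverse the rest of $G$, re-enter at the other end and come back to $v_{k-2}$, and such a cycle can genuinely have length $k$, so your chord yields no contradiction. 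A correct proof adds a chord near an end of the component (say $w_0w_2$), uses the degree-two forcing to show the created cycle must make the detour through both outside attachment vertices $p,q$ of the component, counts lengths (giving at least $k$ edges, or forcing $p=q$), and treats separately the borderline component of length exactly $k-1$ with $p=q$, where a second chord such as $w_0w_3$ is needed. Also ``impossible to have $m\geq k$ if it's to be $C_k$-free'' is false (a longer cycle is $C_k$-free); cycle components should be excluded by connectivity together with a short argument that $G$ cannot itself be a single cycle. Only your derivations of the membership corollaries (no $Z_2$--$Y$ or $Z_2$--$X$ edges, $Y_3$-neighbours lie in $Z_{3+}$) from (iv)--(v) are sound; the proofs of (iv), (v), (vi) themselves all have genuine gaps.
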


  \begin{figure}[htp]
        \centering
    \includegraphics[scale=0.6]{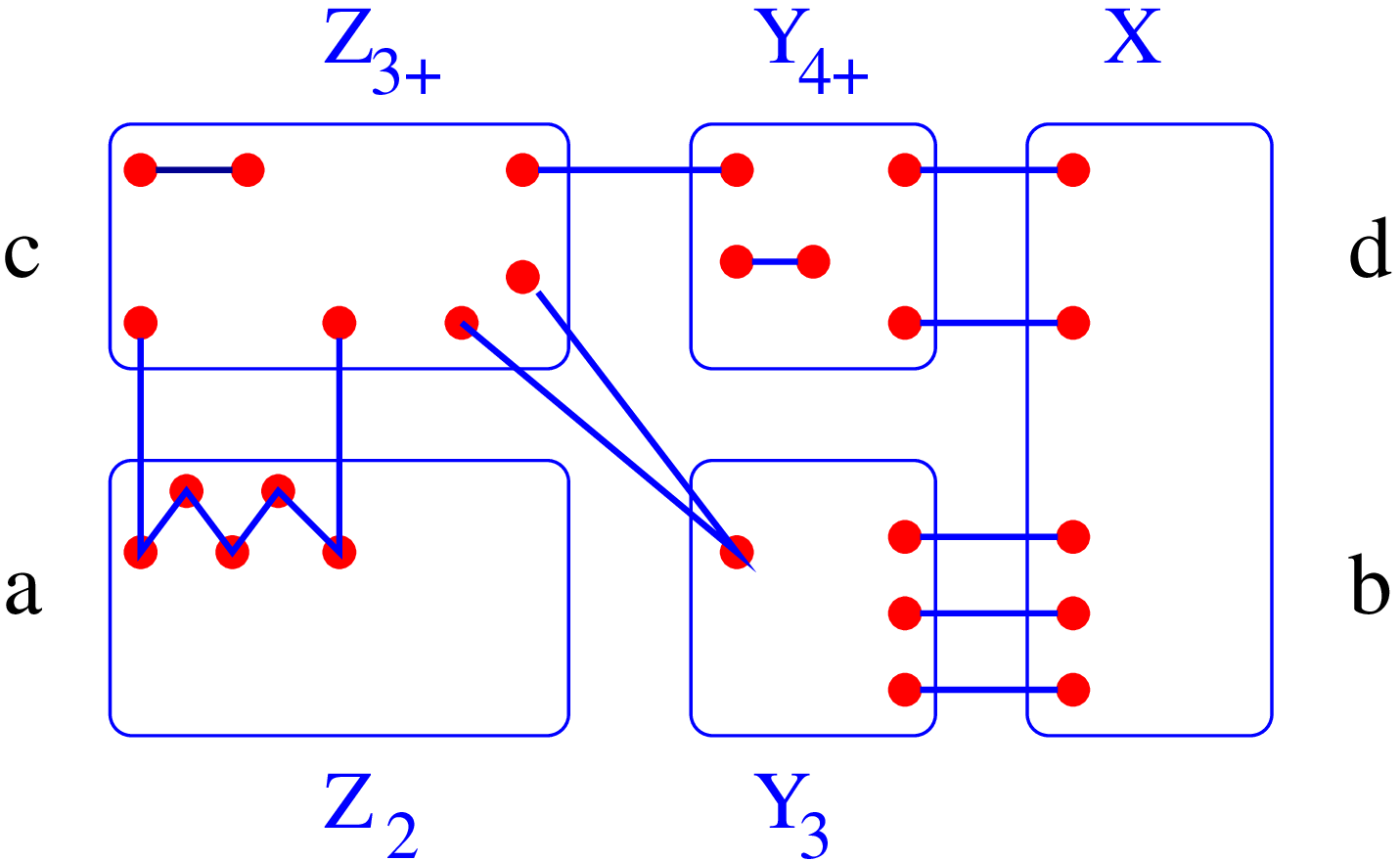}
    \end{figure}

\section{Semisaturated graphs without pendant edges}\label{s:sat_lower2}

\begin{cl}\label{cl:8}
Suppose that $G$ is a $C_k$-semisaturated graph on $n$ vertices
 with minimum degree at least $2$, $k\geq 5$. 
Then every vertex $w$ is contained in some cycle of length at most $k+1$.
\end{cl}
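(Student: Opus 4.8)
The plan is to use semisaturation directly: pick any vertex $w$ and add to $G$ an edge $e=wu$ for a suitable non-neighbor $u$; this creates a new copy of $C_k$, which must pass through $e$, and then $w$ lies on a long cycle containing $e$. Removing $e$ from that $C_k$ leaves a path of length $k-1$ from $w$ to $u$ inside $G$. The idea is then to shortcut this path back to $w$ using the minimum-degree hypothesis, producing a genuine cycle through $w$ of length at most $k+1$.

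First I would dispose of the easy sub-case: if $w$ is already contained in a cycle of $G$ of length at most $k+1$, we are done, so assume not. In particular $w$ has a non-neighbor (otherwise, since $\deg(w)\geq 2$ and $n\geq k\geq 5$, one checks $w$ already sits on a short cycle), and we may add $e=wu$. By semisaturation $G+e$ has a new $C_k$, necessarily through $e$, so $G$ contains a $w$--$u$ path $P=w p_1 p_2 \dots p_{k-2} u$ of length $k-1$. Now I would look at the endpoint $u$: since $\deg_G(u)\geq 2$, besides its neighbor $p_{k-2}$ on $P$, the vertex $u$ has another neighbor $u'$ in $G$. If $u'=w$ then $P$ together with $uw$ is a $k$-cycle through $w$ and we are done. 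If $u'$ lies on $P$, say $u'=p_i$, then $u' p_{i+1}\dots p_{k-2} u u'$ is a cycle of length at most $k-1$ — but it need not contain $w$; to fix this I would instead argue more carefully about which neighbor to exploit.

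The cleaner approach is to exploit a neighbor of $w$ itself. Let $p_1$ be the neighbor of $w$ on $P$; since $\deg_G(w)\geq 2$, $w$ has a second neighbor $v\neq p_1$. If $v$ lies on $P$, say $v=p_j$ with $j\geq 2$, then $w p_1 \dots p_j w$ is a cycle through $w$ of length $j+1\leq k-1\leq k+1$, done. Otherwise $v\notin V(P)$, and I would then feed the edge $wv$ back into the semisaturation machinery: look for a short cycle through the edge $wv$ by repeating the argument, or better, use the path $P$ from $w$ together with the edge $wv$ to extend and then close up. The subtle point is guaranteeing termination. The natural fix is a minimality/extremality choice: among all paths of length $k-1$ starting at $w$ that arise this way (over all valid added edges $e$ at $w$), choose one, and also among all cycles through $w$ of length at most $k+1$ that can be partially built; but since we are assuming no such cycle exists, we should derive a contradiction instead. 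Concretely: the second neighbor $v$ of $w$ is a non-neighbor of $u$ (else $w p_1\dots u v w$ is a cycle through $w$ of length $k+1$ — done!), so we may add $vu$ to $G$; this creates a new $C_k$ through $vu$, giving a $v$--$u$ path $P'$ of length $k-1$ in $G$. Then $w$, the edge $wv$, the path $P'$, and the edge $uw$ (if present) or a shortcut near $u$ close a cycle through $w$ of length at most $k+1$.

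The main obstacle is exactly this closing-up step: controlling the interaction between the path obtained from semisaturation and the extra neighbor, so that the resulting short cycle genuinely passes through $w$ and has length at most $k+1$ rather than something uncontrolled. I expect the right bookkeeping is to track the two neighbors of $w$ (guaranteed by $\delta(G)\geq 2$) and, by the case analysis above, force one of the following: (a) a second neighbor of $w$ lies on the length-$(k-1)$ path $P$, giving a cycle of length $\leq k-1$; (b) $u$ is adjacent to $w$, giving a $C_k$; (c) $v$ is adjacent to $u$, giving a $C_{k+1}$; or (d) none of these, in which case the added edge $vu$ produces yet another length-$(k-1)$ path $P'$ avoiding $\{p_1\}$, and combining $w v P' u w$ (or a one-step shortcut at $u$) yields a cycle through $w$ of length at most $k+1$. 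Routine verification of the lengths in each case (all bounded by $k+1$ because each path contributed has length exactly $k-1$ and we attach at most two edges at $w$ and $u$) completes the argument. \qed
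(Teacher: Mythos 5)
There is a genuine gap, and it sits exactly where you flag it yourself: the ``closing-up step.'' Your plan adds an edge $e=wu$ incident to $w$, and semisaturation then only yields a $w$--$u$ path $P$ of length $k-1$ \emph{in $G$}; to turn this into a cycle of $G$ through $w$ you need some edge back to $w$, and the minimum-degree hypothesis does not supply one. Your case analysis handles (a)--(c), but in case (d) the construction ``$wvP'uw$ (or a one-step shortcut at $u$)'' is not available: the edge $uw$ is precisely the non-edge you added at the start, so it is \emph{never} present, and the ``shortcut near $u$'' is unspecified — $u$'s second neighbor need not be adjacent to anything useful, and the new path $P'$ from $v$ to $u$ may or may not pass through $w$, may enter through the edge $vw$ itself, etc. Moreover, your fallback of ``repeating the argument'' with further added edges has no termination guarantee, as you note but do not resolve. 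So the main line of the proposal does not produce the required cycle.

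The paper avoids all of this with one change of viewpoint: do not add an edge \emph{at} $w$, add the edge between two \emph{neighbors} of $w$. Since $\deg(w)\geq 2$, pick $z_1,z_2\in N(w)$. If $z_1z_2\in E(G)$, $w$ lies on a triangle. Otherwise $G+z_1z_2$ has a new $C_k$ through $z_1z_2$, i.e.\ $G$ contains a $z_1$--$z_2$ path $P$ of length $k-1$. Now both endpoints of $P$ are already joined to $w$ by edges of $G$, so the closing-up is automatic: if $P$ avoids $w$, then $P$ together with $z_1wz_2$ is a $(k+1)$-cycle through $w$; if $w\in V(P)$, then $w$ splits $P$ into $L_1,L_2$ with $|L_1|+|L_2|=k-1$, and whichever $L_i$ has length at least $2$ closes with the edge $z_iw$ into a cycle through $w$ of length at most $k-1$. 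I recommend you rework your argument along these lines; the min-degree hypothesis is used only to guarantee the two neighbors $z_1,z_2$, not to hunt for chords of a long path.
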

\begin{proof}
Consider two arbitrary vertices $z_1,z_2$ in the neighborhood $N(w)$. 
If $z_1z_2 \in E(G)$, then $w$ is contained in a triangle. 
If $z_1z_2\not\in E(G)$, then $G+z_1z_2$ contains a new $k$-cycle;
 there is a path $P$ of length $(k-1)$ in $G$ with endpoints $z_1$ and $z_2$. 
If $P$ avoids $w$, then $P$ together with $z_1wz_2$ form a $k+1$ cycle.
If $w$ splits $P$ into two paths
 $L_1, L_2$, where $L_i$ starts in $z_i$ and ends in $w$, then either  
  $L_1 +z_1w$, or $L_2+z_2w$, or both form a proper cycle of 
 length at most $k-1$.
\end{proof}

Note that Claim~\ref{cl:8} itself (and the connectedness of $G$)
 immediately imply 
$$
 e(G)\geq (n-1)\dfrac{k+2}{k+1}.
  $$ 
We can do a bit better repeatedly using  the semisaturatedness of $G$.

\begin{lem}\label{le:lower}
Suppose that $G$ is a $C_k$-semisaturated graph on $n$ vertices
 with minimum degree at least $2$, $k\geq 5$. 
Then
\begin{equation*} 
   e(G)\geq \frac{k}{k-1}\, n -\frac{k+1}{k-1}. 
 \end{equation*}
\end{lem}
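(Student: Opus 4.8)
The plan is to improve the averaging argument underlying the remark after Claim~\ref{cl:8}. The weak bound $e(G)\ge (n-1)\frac{k+2}{k+1}$ comes from the fact that $G$ is connected and every vertex lies in a short cycle, so $G$ cannot be a tree — it must contain at least $n-1$ edges plus roughly one extra edge per ``independent'' short cycle. To do better, I would set up a discharging-type or edge-counting argument that extracts more than one surplus edge per cycle. Concretely, write $e(G)=n-1+\mu$ where $\mu=e(G)-n+1$ is the cyclomatic number (the dimension of the cycle space); the goal is equivalent to showing $\mu\ge \frac{n-k-1+1\cdot\ldots}{k-1}$, i.e. $\mu \ge (n-2)/(k-1)$ after rearranging $\frac{k}{k-1}n-\frac{k+1}{k-1} = n-1 + \frac{n-2}{k-1}$. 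So the real content is: \emph{the cycle space of $G$ has dimension at least $(n-2)/(k-1)$.}

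To prove that, I would build up a spanning subgraph greedily. Start with a BFS/DFS spanning tree $F$ of $G$ (using connectedness). The $\mu$ non-tree edges are what we must lower-bound. I would argue that a set of $\mu$ fundamental cycles, each of length at most $k+1$ by Claim~\ref{cl:8}, together must cover all $n$ vertices: every vertex $w$ lies on a cycle of length $\le k+1$, and such a cycle, being a subgraph of $G$, is a $\mathbb{Z}_2$-sum of fundamental cycles, hence $w$ lies on at least one fundamental cycle — but this only gives $\mu\cdot(k+1)\ge$ (something weaker). The sharper route: choose an ordering $v_1,\dots,v_n$ of the vertices and process them, maintaining a subgraph $G_i$ that contains $v_1,\dots,v_i$ and in which every $v_j$ ($j\le i$) already lies on a cycle; at step $i+1$, if $v_{i+1}$ is not yet covered, Claim~\ref{cl:8} (applied in $G$) gives a cycle $C$ of length $\le k+1$ through $v_{i+1}$, and adding the edges of $C$ not yet present raises the cyclomatic number by at least $1$ while covering at most $k-1$ genuinely new vertices (the cycle has $\le k+1$ vertices, but to contribute a \emph{new} independent cycle at least two of its vertices — an entry and exit point into the already-built connected part — must be old; actually one old vertex plus one closing identification suffices, giving $\le k-1$ new vertices per unit of $\mu$). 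Being slightly careful with the ``$-2$'' comes from the first cycle, which can have up to $k+1$ vertices none of which were ``already there,'' but $v_1,v_2$ can be taken on it for free.

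The main obstacle I expect is making the bookkeeping of ``new vertices per new independent cycle'' tight enough to get exactly $k-1$ in the denominator rather than $k$ or $k+1$. This is where the \emph{semisaturatedness} (as opposed to mere minimum degree $2$) must be used more than once, as the lemma's preamble hints (``repeatedly using the semisaturatedness of $G$''): when the short cycle $C$ through an uncovered vertex attaches to the current subgraph at only one point, one gets merely $\le k$ new vertices; to squeeze out the extra vertex I would add a non-edge between two suitable vertices of $C$ and its attachment, invoke semisaturatedness to get a $(k-1)$-path, and thereby force a \emph{second} connection point, improving the count to $\le k-1$ new vertices. Handling the boundary/parity cases of this argument — short cycles that are triangles, cycles that share many vertices with the current subgraph, and the very first step — is the fiddly part, but each case only helps (fewer new vertices), so the worst case $k-1$ should survive, yielding $\mu \ge \lceil (n-2)/(k-1)\rceil \ge (n-2)/(k-1)$ and hence the claimed bound.
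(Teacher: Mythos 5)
Your overall strategy (grow a subgraph step by step and charge at most $k-1$ new vertices to each unit of cyclomatic number, using semisaturation a second time to force an extra attachment) is the same in spirit as the paper's proof, but as written it has two genuine gaps at exactly the points where the real work lies. First, you obtain the short cycle through an uncovered vertex $v_{i+1}$ from Claim~\ref{cl:8}, but that cycle may be completely disjoint from the subgraph built so far; a disjoint cycle of length up to $k+1$ still increases the cycle space dimension by only one (your parenthetical claim that a ``new independent cycle'' must meet the old part in two vertices is false -- independence in the cycle space needs no shared vertices), and the path you would later add to connect the two components raises $e$ and $n$ by the same amount, so it contributes nothing to $\mu$. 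In the worst case this gives roughly $2k-1$ new vertices per unit of $\mu$, far short of $k-1$. The paper avoids this not by invoking Claim~\ref{cl:8} at this stage at all, but by growing along an edge $xy$ with $x\in V_i$, $y\notin V_i$ and applying semisaturatedness to the non-edge $xz$ (with $z$ a second neighbour of $y$), so that the new $(k+1)$-cycle $C=P+xy+yz$ automatically contains the old vertex $x$.

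Second, your ``boost'' step -- add a non-edge between ``two suitable vertices'' and use the resulting $(k-1)$-path to force a second connection -- is asserted rather than proved, and the assertion that ``each case only helps'' is not true without substantial preparation. You must first guarantee that the chosen pair really is a non-edge (if $C$ has a diagonal, or $N(y)$ spans an edge, or there is a short cycle through $y$ or a short returning path into $V_i$, these are separate cases that must be disposed of first, as the paper does), and, more importantly, you must show that the $(k-1)$-path produced by semisaturatedness contains at least one edge outside everything already built; if it runs entirely inside $E(G_i)\cup E(C)$ there is no gain whatsoever. The paper's choice of the specific pair $w,z$ (the two neighbours of $x$ and $y$ on $C$) is what makes this work: since $x$ is a cut vertex of $G_i\cup C$ and $C$ is chordless, no $w$--$z$ path of length $k-1$ can stay inside $E(G_i)\cup E(C)$, so $P'$ can be shortened to a diagonal path of length at most $k-1$, giving $k+\ell-1\le 2k-2$ new vertices against $k+\ell+1$ new edges. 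Until you supply (i) a mechanism forcing every new short cycle to attach to the current subgraph and (ii) a verified choice of non-edge whose semisaturation path is guaranteed to leave the built edge set, the amortized bound of $k-1$ vertices per independent cycle does not follow, so the proposal is an outline of the right idea rather than a proof.
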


\noindent
{\sl Proof.}\quad
We define an increasing sequence of subgraphs $G_1, G_2, \dots, G_t=G$ 
 with vertex sets $V_1\subseteq V_2 \subseteq \dots \subseteq V_t=V(G)$  
 such that $G_i$ is a subgraph of $G_{i+1}$ and 
\begin{equation}\label{eq:13}
   |E(G_{i+1})\setminus E(G_i)|\geq \frac{k}{k-1} \left(|V_{i+1}|-|V_i|\right)
   \end{equation}
(for $i=1,2, \dots, t-1$).
This, together with 
\begin{equation}\label{eq:14}
 e(G_1)\geq  \frac{k}{k-1}\, |V_1| -\frac{k+1}{k-1}
  \end{equation}
imply the claim.

$G_1$ is the shortest cycle in the graph $G$. 
Its length is at most $k+1$ so (\ref{eq:14}) obviously holds.

If $G_i$ is defined and one can find a path $P$ of length at most $k$
 with endpoints in $V_i$ but $E(P)\setminus E(G_i)\neq\emptyset$, then
 we can take $E(G_{i+1})=E(G_i)\cup E(P)$. 
From now on, we suppose that such a {\it short returning} 
 path does not exist.
Our procedure stops if $V(G_i)=V(G)=:V$. 

In the case of $V\setminus V_i\neq \emptyset$,
 the connectedness of $G$ 
 implies that there exists an $xy$ edge with $x\in V_i$ and
 $y \in V\setminus V_i$. 
Since $|N(y)|\geq 2$ we have another edge $yz\in E(G)$, $z\neq x$.

We have $N(y)\cap V_i=\{ x\}$, otherwise one gets a path $xyz$
 of length smaller than $k$ with endpoints in $V_i$ but going out of
 $G_i$, contradicting our earlier assumption. 
Similarly, we obtain that $N(y)$ contains no edge, otherwise we can 
 define $E(G_{i+1})$ as either $E(G_i)$ plus the three edges of a  
 triangle $xy$, $yz$, $xz$ or we add 
 four edges $xy$, $yz_1$, $yz_2$, and $z_1z_2$ but only three vertices
 (namely $y$, $z_1$, and $z_2$). The obtained 
 $G_{i+1}$ obviously satisfies (\ref{eq:13}) in both cases.
Similarly, if there is a cycle $C$ of length at most $k-1$ contaning 
 $y$, then we can define $E(G_{i+1})$ as $E(G_i)$ plus $E(C)$ and $xy$.
From now on, we suppose that such a {\it short cycle through} $y$ 
 does not exist.

Fix a neighbor $z$ of $y$, $z\neq x$.
Since $zx \not \in E(G)$, $G$ contains a path $P$ of length $k-1$ with 
  endvertices $x$ and $z$. 
Since $G$ does not contain a short returning path nor a short cycle through
 $y$, we obtain that $P$ avoids $y$ and $V(P)\cap V_i=\{ x\}$.

If the cycle $C:= P+xy+yz$ of length $k+1$ has any diagonal edge 
 then $G_{i+1}$ is obtained by adding $C$ together with its diagonals.
From now on, we suppose that $C$ does not have any diagonals.
More generally, if there is any {\it diagonal} {\it path} $P$ of length 
 $\ell \leq k-1$
 with edges disjoint from $E(G_i)\cup E(C)$ but with endpoints
 in $V_i\cup V(C)$ then we can define $E(G_{i+1}):=E(G_i)\cup E(C)\cup E(P)$
 and have added $k+\ell-1$ vertices and $k+\ell+1$ edges, 
 obviously satisfying  (\ref{eq:13}).

However, such a diagonal path exists. Let $w \neq y$ be the other neighbor 
 of $x$ in $C$. 
Since  $wz \not \in E(G)$,
 there is a path $P'$ of length $k-1$ with endpoints $w$ and $z$.
This $P'$ must have edges outside $E(G_i)\cup E(C)$ so 
 it can be shortened to a diagonal path $P$ of length at most $k-1$.
This completes the proof of the Lemma. \qed

\section{A lower bound for the number of edges of semisaturated graphs}\label{s:ssat_lower3}

In this section we finish the proof of Theorem~\ref{th:2}.
Let $G$ be a $C_k$-semisaturated graph on $n$ vertices
 with minimum number of edges, $k\geq 5$. 
Let $X$ be the set of degree one vertices, $x:=|X|$. 
By Lemma~\ref{le:degree1} $|X|\leq n/2$, and for $G':=G\setminus X$
   we have $e(G')=e(G)-x$ and $G'$ is a $C_k$-semisaturated graph on $n-x$ vertices
 with minimum degree at least 2.
Then Lemma~\ref{le:lower} can be applied to $e(G')$.
We obtain
\begin{eqnarray*}
\ssat(n,C_k)=e(G) &\geq& x + (n-x)\frac{k}{k-1} -\frac{k+1}{k-1} \\ 
           &\geq& \frac{n}{2} +\frac{n}{2} \frac{k}{k-1}-\frac{k+1}{k-1} 
 =n\left(1+\frac{1}{2k-2}\right) -\frac{k+1}{k-1} 
\end{eqnarray*}
\qed

Since $\sat(n,C_k) \geq \ssat(n,C_k)$, this is already a better lower bound 
 than the one in~(\ref{eq:old}) from~\cite{BCE}.

\section{A lower bound for the number of edges of  $C_k$-saturated graphs}\label{s:sat_lower}

In this section we finish the proof of Theorem~\ref{th:1}.
Let $G$ be a $C_k$-saturated graph on $n$ vertices, $k\geq 5$. 
Let us consider the partition of $V(G)=X\cup Y_3\cup Y_{4+}\cup Z_2\cup Z_{3+}$ defined in 
 Section~\ref{s:ssat_lower1}, where $X$ is the set of degree one vertices,  
 $Y$ is their neighbors.
By Lemma~\ref{le:degree1} $|X|=|Y|$. 
To simplify notations we use $a:=|Z_2|$, $b:=|Y_3|$, $c:=|Z_{3+}|$, and $d:=|Y_{4+}|$.
We have
\begin{equation*}
   n= a+2b+c+2d.
  \end{equation*}
By definition of the parts we have the lower bound
$$ 2e(G) = \sum_{v\in V} \deg(v) \geq |X| + 2|Z_2| + 3|Y_3| + 3|Z_{3+}| + 4|Y_{4+}|.$$ 
This yields
\begin{equation}\label{eq:102}
 2e\geq 2n + c + d.
\end{equation}

Now we estimate the number of edges by considering four disjoint subsets of $E(G)$.
The part $X$ is adjacent to $|X|$ edges, 
and according to Lemma~\ref{le:structure}, $Z_2$ is adjacent to at least 
 $\frac{k}{k-1}|Z_2|$ edges, $Y_3$ is adjacent to exactly $3|Y_3|$ edges from which
   $|Y_3|$ has already been counted at $X$,
  and finally $Y_{4+}$ is adjacent to at least another $\frac{3}{2}|Y_{4+}|$ edges.
We obtain
$$ e(G) \geq |X|+ \frac{k}{k-1}|Z_2| + 2|Y_3| + \frac{3}{2}|Y_{4+}|. $$
Therefore we get 
\begin{equation}\label{eq:103}
	e \geq n + \frac{1}{k-1}a + b - c + \frac{1}{2}d.
\end{equation}

By Lemma~\ref{le:degree1} $G \setminus X$ is also $C_k$-semisaturated.
Apply Lemma~\ref{le:lower} to estimate $e(G\setminus X)=e-b-d$,  
 multiply by $(k-1)$ and rearrange, we get
\begin{equation}\label{eq:104}
	(k-1)e \geq kn - b -d -(k+1).
\end{equation}

Adding up the above three inequalities (\ref{eq:102}), (\ref{eq:103}), and (\ref{eq:104})
 we obtain 
\begin{equation*}  
	(k+2)e \geq (k+3)n  + \frac{1}{k-1}a + \frac{1}{2}d -(k+1).
\end{equation*}
This implies the desired lower bound in (\ref{eq:th1}). \qed

\noindent	
{\bf Remark.}\enskip 
We can do  slightly better if we multiply 
  (\ref{eq:102}), (\ref{eq:103}), and (\ref{eq:104})
by $k$, $k-1$, and $k-3$, resp., then adding up and simplifying 
 we get 
 \begin{equation}\label{eq:10ff}
	e(G)  
       > \frac{k^2}{k^2-k+2}\, n -1. 
\end{equation}


\begin{thebibliography}{99}
\parskip 2pt plus 2pt

\bibitem{fur:Alon}
Noga Alon,
\textit{An extremal problem for sets with applications to graph theory},
J. Combin. Theory Ser. A, \textbf{40} (1985), 82--89.

\bibitem{fur:AEHK}
Noga Alon, Paul Erd\H os, Ron Holzman, Michael Krivelevich,
\textit{On $k$-saturated graphs with restrictions on the degrees}, J. Graph Theory, 
\textbf{23} (1996), 1--20.

\bibitem{BCE} C. A. Barefoot, L. H. Clark, R. C. Entringer, T. D. Porter, L. A. Sz\'{e}kely, Zs. Tuza, 
\textit{Cycle-saturated graphs of minimum size}, Discrete Mathematics \textbf{150} (1996),  31--48.

\bibitem{fur:BFP}
Tom Bohman, Maria Fonoberova, Oleg Pikhurko,
\textit{The saturation function of complete partite graphs}, J. Comb., \textbf{1}
 (2010), 149--170.

\bibitem{fur:boll65}
B. Bollob\'as, 
\textit{On generalized graphs}, Acta Math. Acad. Sci. Hungar. \textbf{16} (1965), 
447--452.

\bibitem{fur:boll68}
B. Bollob\'as, 
\textit{Weakly k-saturated graphs}, In Beitr\"age zur Graphentheorie
(Kolloquium, Mane\-bach, 1967), pp. 25--31. Teubner, Leipzig, 1968.

\bibitem{fur:yachen_C5}
Ya-Chen Chen, \textit{Minimum $C_5$-saturated graphs}, J. Graph Theory 
 \textbf{61} (2009), 111--126.

\bibitem{fur:yachen_C5all}
Ya-Chen Chen, \textit{All minimum $C_5$-saturated graphs}, J. Graph Theory. 

\bibitem{fur:EHM}
P. Erd\H os, A. Hajnal, J. W. Moon, \textit{A problem in graph theory}, 
Amer. Math. Monthly \textbf{71} (1964), 1107--1110.

\bibitem{fur:EH}
P. Erd\H os, Ron Holzman, \textit{On maximal triangle-free graphs}, J. Graph
Theory, \textbf{18} (1994), 585--594.

\bibitem{fur:Faudree_survey} J. Faudree, R. Faudree, J. Schmitt, 
\textit{A survey of minimum saturated graphs and hypergraphs}, manuscript, May 2010. 

\bibitem{fur:FFGJ}
Ralph Faudree, Michael Ferrara, Ronald Gould, Michael Jacobson,
\textit{ $tK_p$-saturated graphs of minimum size}, 
 Discrete Math., \textbf{309} (2009), 5870--5876.

\bibitem{fur:FFL}
D. C. Fisher, K. Fraughnaugh, L. Langley, 
\textit{On $C_5$-saturated graphs with minimum size},  
Proceedings of the Twenty-sixth Southeastern International Conference on Combinatorics, 
 Graph Theory and Computing (Boca Raton, FL, 1995), \textit{Congr. Numer}. \textbf{112} (1995),
 45--48. 

\bibitem{fur:frankl}
Peter Frankl, 
\textit{An extremal problem for two families of sets}, 
European J. Combin., \textbf{3} (1982), 125--127.

\bibitem{fur:GLS} Ronald Gould, Tomasz {\L}uczak, John Schmitt, 
\textit{Constructive upper bounds for cycle-saturated graphs of minimum size},
 Electronic J. Combinatorics \textbf{13} (2006),  Research Paper 29, 19 pp.

\bibitem{fur:Kal}
Gil Kalai,
\textit{Weakly saturated graphs are rigid}, In Convexity and graph theory
(Jerusalem, 1981), volume 87 of North-Holland Math. Stud., pp. 189--190.
North-Holland, Amsterdam, 1984.

\bibitem{fur:KT} L. K\'{a}szonyi, Zs. Tuza, 
\textit{Saturated graphs with minimal numbers of
edges}, J. Graph Theory \textbf{10} (1986),  203--210.

\bibitem{fur:Oll} 
L. Taylor Ollmann,
\textit{$K_{2,2}$ saturated graphs with a minimal number of edges}, In
Proceedings of the Third Southeastern Conference on Combinatorics, Graph
Theory, and Computing (Florida Atlantic Univ., Boca Raton, Fla., 1972),
pages 367--392, Boca Raton, Fla., 1972. Florida Atlantic Univ.

\bibitem{Pik4}
O. Pikhurko, 
\textit{Results and open problems on minimum saturated hypergraphs}, Ars Combin. 
 \textbf{72} (2004), 111--127.

\bibitem{Tuza89}  Zs. Tuza, 
\textit{$C_4$-saturated graphs of minimum size},  
Acta Univ. Carolin. Math. Phys., \textbf{30} (1989),  161--167.

\end{thebibliography}
\end{document}